\documentclass[12pt]{article}
\oddsidemargin \evensidemargin
\usepackage{standalone}

\usepackage[english]{babel}
\usepackage[latin1]{inputenc}
\usepackage[T1]{fontenc}
\usepackage{amssymb,amsmath,amstext,amsfonts,amsthm,amscd,latexsym, mathrsfs,amsbsy}
\usepackage{mathtools}
\usepackage{verbatim}
\usepackage{fancyvrb}
\usepackage{relsize}
\usepackage[colorlinks=true, urlcolor=blue, linkcolor=blue, citecolor=blue]{hyperref}
\usepackage{tikz-cd}
\usepackage[normalem]{ulem} 
\usepackage{caption}
\usepackage{graphicx}
\usepackage{array}
\usepackage{arydshln}
\usepackage{xcolor}
\usepackage[matrix,arrow]{xy}
\usepackage{booktabs}
\usepackage{cleveref}
\usepackage{hhline}
\usepackage{cprotect} 
\usepackage{enumitem,kantlipsum}
\usepackage{tcolorbox}

\newtheorem{theorem}{Theorem}[section]

\newtheorem{proposition}[theorem]{Proposition}

\theoremstyle{definition}
\newtheorem{definition}[theorem]{Definition}

\newtheorem{remark}[theorem]{Remark}

\newtheorem{cor}[theorem]{Corollary}
\newtheorem{examplex}[theorem]{Example}
\newenvironment{example}
{\pushQED{\qed}\examplex}
{\popQED\endexamplex}
\newtheorem{exampletwo}[theorem]{Example}

\newcommand*{\mge}[1][]{\mathsmaller{\ge #1}}

\newcommand{\C}{{\mathbb C}}
\newcommand{\G}{{\mathbb G}}

\newcommand{\R}{{\mathbb R}}
\newcommand{\Z}{{\mathbb Z}}

\newcommand{\PP}{{\mathbb P}}

\def\bj{{\boldsymbol{j}}}

\def\bd{{\boldsymbol{d}}}

\def\bpi{{\boldsymbol{\pi}}}
\def\bone{{\boldsymbol{1}}}

\def\codim{\operatorname{codim}}
\def\rank{\operatorname{rank}}
\def\im{\operatorname{im}}

\newcommand{\mT}{\mathsmaller{\mathsf{T}}}

\newcommand{\kn}{{\mathcal N}}
\newcommand{\ko}{{\mathcal O}}

\newcommand{\kz}{{\mathcal Z}}

\makeatletter
\renewcommand*\env@matrix[1][*\c@MaxMatrixCols c]{%
    \hskip -\arraycolsep
    \let\@ifnextchar\new@ifnextchar
    \array{#1}}
\makeatother

\newcommand\blfootnote[1]{%
  \begingroup
  \renewcommand\thefootnote{}\footnote{#1}%
  \addtocounter{footnote}{-1}%
  \endgroup
}

\title{\bf Nash Loci}

\author{Luca Sodomaco and Julian Weigert}

\date{}

\begin{document}

\maketitle

\begin{abstract}
\noindent In the study of Nash equilibria of finite-player games, one often seeks equilibria that are compatible with predetermined constraints, either determined by the players or by an external agent.
We discuss the algebraic loci, called {\em Nash loci}, of games whose {\em Nash equilibrium scheme} intersects a fixed algebraic variety in a product of projective spaces. We determine their dimensions and multidegrees in multiprojective space, and their equations for two-player games and for small multiple-player games. The multilinear equations defining the Nash equilibrium scheme allow us to describe Nash loci in the language of Grassmannians and Pl\"ucker coordinates. Motivated by this fact, we relate Nash loci to {\em multigraded associated varieties}, which are subvarieties in products of Grassmannians that generalize the multigraded Cayley-Chow hypersurfaces of Osserman and Trager.
\end{abstract}

\blfootnote{{\footnotesize 2020 Mathematics Subject Classification: 14A10, 14C17, 14M15, 91A05, 91A06, 91A12, 91A80.\newline Keywords and phrases: Associated variety, game, Grassmannian, scheme, tensor, totally mixed Nash equilibrium, vector bundle.}}

\section{Introduction}\label{sec: intro}

Consider the classical ``Rock, Paper and Scissors'' game, a two-player game encoded by the following payoff table:
\begin{table}[ht]
    \centering
    \begin{tabular}{c|ccc}
           & R & P & S \\\hline
         R & $(0,0)$ & $(-1,1)$ & $(1,-1)$\\
         P & $(1,-1)$ & $(0,0)$ & $(-1,1)$\\
         S & $(-1,1)$ & $(1,-1)$ & $(0,0)$\\
    \end{tabular}
    \caption{The payoff table of the Rock, Paper and Scissors game.}
    \label{tab: RPS}
\end{table}

This game falls into the family of two-player games with a single mixed Nash equilibrium. Let $\pi^{(1)}=(\pi_1^{(1)},\pi_2^{(1)},\pi_3^{(1)})$ and $\pi^{(2)}=(\pi_1^{(2)},\pi_2^{(2)},\pi_3^{(2)})$ denote the strategies of players 1 and 2, respectively, where $\pi^{(1)}$ and $\pi^{(2)}$ are vectors in the two-dimensional probability simplex $\Delta_2\subset\R^3$. The expected payoffs for the two players are, respectively
\begin{align*}
    \Pi_1(\pi^{(1)},\pi^{(2)}) &= -\pi_1^{(1)}\pi_2^{(2)}+\pi_1^{(1)}\pi_3^{(2)}+\pi_2^{(1)}\pi_1^{(2)}-\pi_2^{(1)}\pi_3^{(2)}-\pi_3^{(1)}\pi_1^{(2)}+\pi_3^{(1)}\pi_2^{(2)}\\
    \Pi_2(\pi^{(1)},\pi^{(2)}) &= \pi_1^{(1)}\pi_2^{(2)}-\pi_1^{(1)}\pi_3^{(2)}-\pi_2^{(1)}\pi_1^{(2)}+\pi_2^{(1)}\pi_3^{(2)}+\pi_3^{(1)}\pi_1^{(2)}-\pi_3^{(1)}\pi_2^{(2)}\,,
\end{align*}
in particular $\Pi_1+\Pi_2=0$ due to the shape of the payoff table in Table~\ref{tab: RPS}. The Nash equilibria of the Rock, Paper, and Scissors game are the solutions $(\bar{\pi}^{(1)},\bar{\pi}^{(2)})\in\Delta_2\times\Delta_2$ of the simultaneous optimization problem
\begin{align*}
    \Pi_1\left(\bar{\pi}^{(1)},\bar{\pi}^{(2)}\right) \ =\  \max_{\pi^{(1)}\in\Delta_2}\ \Pi_1\left(\pi^{(1)},\bar{\pi}^{(2)}\right)\,,\quad
    \Pi_2\left(\bar{\pi}^{(1)},\bar{\pi}^{(2)}\right) \ =\  \max_{\pi^{(2)}\in\Delta_2}\ \Pi_2\left(\bar{\pi}^{(1)},\pi^{(2)}\right)\,.
\end{align*}
Using this setting, one verifies that the unique Nash equilibrium of this game is
\begin{equation}\label{eq: tmNE RPS}
    \bar{\pi}^{(1)} = \bar{\pi}^{(2)} = \left(\frac{1}{3},\frac{1}{3},\frac{1}{3}\right)\,,
\end{equation}
in particular it is {\em totally mixed}, namely all three components of the vectors $\bar{\pi}^{(1)}$ and $\bar{\pi}^{(2)}$ have strictly positive values. In other words, the pair $(\bar{\pi}^{(1)},\bar{\pi}^{(2)})$ lies in the product $\Delta_2^\circ\times\Delta_2^\circ$ of the relative interiors of the probability simplices.

\medskip

A feature of the game described above is that the components $\bar{\pi}^{(1)}$ and $\bar{\pi}^{(2)}$ are {\em equal}. This fact motivates a natural question: what are all $3\times 3$ games with at least one Nash equilibrium with equal components?
In this paper, we restrict our attention to {\em totally mixed Nash equilibria}. Phrased in more generality, a possible question could be: given the $n$-player game format $d^{\times n}$, where $d$ is the number of possible choices for each player, what is the locus of all games with that format having at least one totally mixed Nash equilibrium with all equal components?

\medskip

There is nothing special in considering the condition that all components of a totally mixed Nash equilibrium are equal.
In Section~\ref{sec: definition} we introduce the main notations and definitions. We consider {\em multihomogeneous} polynomial constraints in the {\em multiprojective strategy space} $\PP^\bd=\PP^{d_1-1}\times\cdots\times\PP^{d_n-1}$, whose elements are $n$-tuples of vectors encoding mixed strategies of the $n$ players up to a scalar factor. Equivalently, we fix an algebraic subvariety $Y\subseteq\PP^\bd$ and look for all games with a fixed format $\bd=(d_1,\ldots,d_n)$ whose {\em Nash equilibrium scheme} (that is, the locus in $\PP^\bd$ cut out by the equations defining totally mixed Nash equilibria, see Definition~\ref{def: Nash equilibria scheme}) meets $Y$ in at least one point. In Definition~\ref{def: Nash locus} we introduce this locus as the {\em Nash locus} of $Y$.

\medskip

In Section~\ref{sec: codim degree} we compute the codimension and the multidegree of an arbitrary Nash locus using classical intersection theory in multiprojective spaces. In particular, Theorem~\ref{thm: codim degree Nash locus} can be thought of as a ``Nash version'' of \cite[Theorem 1]{shahidi2021degrees} in the context of best-rank-one approximation of higher-order tensors; see also \cite[Section 2.4]{abo2026vector} for a more detailed comparison between totally mixed Nash equilibria and {\em singular vector tuples} of higher-order tensors. In the rest of the section, we give determinantal equations for the Nash loci of two-player games, as well as for some multi-player games.

\medskip

Section~\ref{sec: multigraded associated varieties} is motivated by the following fact. The equations that cut out the Nash equilibrium scheme of an $n$-player game are {\em multilinear} in the multiprojective strategy space $\PP^\bd$. After applying a {\em mixed Segre embedding} described in \eqref{eq: segre}, these equations become {\em linear} in $n$ separate vectors of variables. Equivalently, the Nash equilibrium scheme can be viewed as the intersection between a product of linear spaces and the mixed Segre embedding of the multiprojective strategy space $\PP^\bd$. Similarly, the Nash locus of $Y\subseteq\PP^\bd$ can be interpreted as the locus of products of linear spaces (with the same dimensions as in the unconstrained case) that intersect the mixed Segre embedding of $Y$. Since products of linear spaces are naturally encoded by tuples of points in a product of Grassmannians, the whole language introduced in Section~\ref{sec: codim degree} can now be rewritten in the language of Grassmannians, and the equations of Nash loci can be written as equations in $n$ sets of Pl\"ucker coordinates. Beyond Nash loci and the above observation, in Section~\ref{sec: multigraded associated varieties} we introduce {\em multigraded associated varieties}, which are subvarieties of a product of Grassmannians. Our construction fits naturally in the recent line of research on {\em multigraded Cayley-Chow forms} \cite{osserman2019multigraded} and {\em multigraded Hurwitz forms} \cite{pratt2026multigraded}.

\medskip

We also implemented several \texttt{Macaulay2} \cite{grayson1997macaulay2} functions to compute Nash loci, their degrees, and the degrees of the corresponding multigraded associated varieties. All these functions are gathered in the file {\tt NashLoci.m2}, which is publicly available on {\tt Zenodo}~\cite{sodomaco2026supplementary} together with extensive documentation. Throughout the paper, we provide several computational examples that utilize our software. In each script displayed, we assume the user has already loaded the file {\tt NashLoci.m2} with the command {\tt load "nashLoci.m2"}.

\section{Definition and first examples of Nash Loci}\label{sec: definition}

Our first goal is to introduce Nash loci in Definition~\ref{def: Nash locus} and give some initial computational examples. Before that, we introduce some preliminary notation.

\medskip

Most of the notation used is compatible with \cite{abo2026vector}. Let $n\ge 2$ be the number of players in the game. For any integer $a\ge 1$, we denote by $[a]$ the set $\{1,\ldots,a\}$. For each $i\in[n]$, Player $i$ is allowed to choose among $d_i\ge 2$ pure strategies. When Player $1$ chooses strategy $j_1\in[d_1]$, Player $2$ chooses strategy $j_2\in[d_2]$,\ldots, Player $n$ chooses strategy $j_n\in[d_n]$, then Player $1$ receives a payoff $g_{\bj}^{(1)}=g_{j_1\cdots j_n}^{(1)}\in\R$, Player $2$ receives a payoff $g_{\bj}^{(2)}\in\R$,\ldots, Player $n$ receives a payoff $g_{\bj}^{(n)}\in\R$.
Let $\bd\coloneqq(d_1,\ldots,d_n)\in\Z_{\mge[2]}^n$ be the {\em game format}. We utilize the index set $I\coloneqq\prod_{i=1}^n[d_i]$.
We denote by $\bone$ a tuple that consists of ones and by $\bone_i$ a tuple that has a single zero in the $i$th element and one everywhere else. For simplicity, we use the same notation for the corresponding column vectors.

\medskip

Even though Nash equilibria are defined over $\R$, in what follows we consider complex vector spaces and the solution sets of polynomial systems over $\C$. For every $i\in[n]$, let $V_i\coloneqq\C^{d_i}$ with the standard basis $\{e_1^{(i)},\ldots,e_{d_i}^{(i)}\}$, and consider the tensor product $V\coloneqq\bigotimes_{i=1}^n V_i$. For each $i \in [n]$, let  $(\pi_1^{(i)},\ldots, \pi_{d_i}^{(i)})$ be coordinates on $V_i$, which form the dual basis for the dual space $V_i^*$ of $V_i$ with respect to $\{e_1^{(i)},\ldots,e_{d_i}^{(i)}\}$. We denote by $\pi^{(i)}$ the column vector $(\pi_1^{(i)},\ldots, \pi_{d_i}^{(i)})^\mT$, and let $\bpi\coloneqq(\pi^{(1)},\ldots,\pi^{(n)})\in \prod_{i=1}^n V_i^*$.
We consider the polynomial ring $R_i\coloneqq\C[\pi_1^{(i)},\ldots, \pi_{d_i}^{(i)}]$ for all $i\in[n]$ and $R\coloneqq R_1\otimes\cdots\otimes R_n$. Similarly we consider polynomial rings $S_i\coloneqq\C[\{g_\bj^{(i)} \mid \bj\in I\}]$ for all $i\in[n]$ and $S\coloneqq S_1\otimes\cdots\otimes S_n$.
Finally, for each $i \in [n]$, let $\PP^{d_i-1}\coloneqq \PP(V_i)$ be the projective space of one-dimensional subspaces of $V_i$, and following \cite{abo2026vector} we define $\PP^\bd\coloneqq\prod_{i=1}^n\PP^{d_i-1}$. Here we call it the {\em multiprojective strategy space} associated with $\bd$.

\bigskip

We encode an {\em $n$-player game} $G$ by the tuple of tensors $G=(G^{(1)},\ldots,G^{(n)})$ of format $\bd$ with real entries, namely $G^{(i)}=(g_\bj^{(i)})_{\bj\in I}\in V$ for all $i\in[n]$.
For all $i\in[n]$, let $\Delta_{d_i-1}$ be the $(d_i-1)$-dimensional probability simplex:
\[
    \Delta_i \coloneqq \left\{\pi^{(i)}\in \R^{d_i}\ \left|\ \text{$\pi_1^{(i)}+\cdots+\pi_{d_i}^{(i)}=1$ and $\pi_{j}^{(i)} \ge 0$ for all $j\in[d_i]$}\right.\right\}\,.
\]
A vector $\pi^{(i)}\in \Delta_{d_i-1}$ is called a {\em mixed strategy} of Player $i$ because each component $\pi_{j}^{(i)}$ corresponds to the probability that Player $i$ unilaterally selects the pure strategy $j \in [d_i]$. The $n$ players choose a joint probability distribution $\pi^{(1)}\otimes\cdots\otimes\pi^{(n)}=(\pi_{j_1}^{(1)}\cdots\pi_{j_n}^{(n)})_{\bj\in I}$. The {\em expected payoff} for Player~$i$ is the contraction of the tensors $\pi^{(1)}\otimes\cdots\otimes\pi^{(n)}$ and $G^{(i)}$, namely, 
\begin{equation}\label{eq: expected payoff}
\Pi_i\coloneqq G^{(i)}\cdot\pi^{(1)}\otimes\cdots\otimes\pi^{(n)} = \sum_{\bj\in I}\pi_{j_1}^{(1)}\cdots\pi_{j_n}^{(n)}g_{\bj}^{(i)}\,.
\end{equation}
Note that $\Pi_i=\Pi_i(\pi^{(1)},\ldots,\pi^{(n)})$ is a multilinear function of the $n$ mixed strategies.

\begin{definition}[{\cite{nash1950equilibrium}}]\label{def: Nash equilibrium}
Let $G$ be an $n$-player game in normal form. A {\em Nash equilibrium} of $G$ is a mixed strategy $\bar{\bpi}=(\bar{\pi}^{(1)},\ldots,\bar{\pi}^{(n)})\in\prod_{i=1}^n \Delta_{d_i-1}$ such that, for every $i\in[n]$, Player $i$ cannot increase their expected payoff $\Pi_i$ by changing their mixed strategy $\pi^{(i)}$ while the other players keep their mixed strategies fixed. Equivalently 
\begin{equation}\label{eq: opt NE}
    \Pi_i(\bar{\bpi}) = \max_{\pi^{(i)}\in\Delta_{d_i-1}}\Pi_i(\bar{\pi}^{(1)},\ldots,\bar{\pi}^{(i-1)},\pi^{(i)},\bar{\pi}^{(i+1)},\ldots,\bar{\pi}^{(n)})\quad\forall\,i\in[n]\,.
\end{equation}
A Nash equilibrium $\bar{\bpi}$ is {\em pure} if $\bar{\pi}^{(i)}$ is a vertex of $\Delta_{d_i-1}$ for all $i\in[n]$ and is {\em mixed} otherwise. It is {\em totally mixed} if $\bar{\pi}^{(i)}\in\Delta_{d_i-1}^\circ$ for all $i\in[n]$, where $\Delta_{d_i-1}^\circ$ denotes the relative interior of $\Delta_{d_i-1}$. 
\end{definition}

In this paper, we restrict to totally mixed Nash equilibria. In this scenario, one may first study the complex critical points $\bar{\bpi}=(\bar{\pi}^{(1)},\ldots,\bar{\pi}^{(n)})\in\prod_{i=1}^n V_i$ of the simultaneous optimization problem \eqref{eq: opt NE} with active constraints $\sum_{j=1}^{d_i}\pi_j^{(i)}=1$, and then check which solutions have all components with strictly positive coordinates. Note that $\bar{\bpi}=(\bar{\pi}^{(1)},\ldots,\bar{\pi}^{(n)})\in\prod_{i=1}^n V_i$ is critical for the above optimization problem with constraints $\bar{\pi}^{(i)}\in\Delta_{d_i-1}^\circ$ for all $i\in[n]$ if and only if the gradient vectors of $\Pi_i$ and of $\sum_{j=1}^{d_i}\pi_j^{(i)}-1$ with respect to $\pi^{(i)}$ are proportional. These gradient vectors are respectively
\begin{align*}
    &\nabla_i\Pi_i = \left(\frac{\partial\Pi_i}{\partial \pi_{j_i}^{(i)}}\right)_{j_i=1}^{d_i} = \Bigg(\sum_{\substack{j_k\in[d_k]\\k\neq i}}g_{\bj}^{(i)}\pi_{j_1}^{(1)}\cdots\pi_{j_{i-1}}^{(i-1)}\pi_{j_{i+1}}^{(i+1)}\cdots\pi_{j_n}^{(n)}\Bigg)_{j_i=1}^{d_i} = G^{(i)}\cdot\otimes_{k\neq i}\pi^{(k)}\\
    &\nabla_i\left(\sum_{j_i=1}^{d_i}\pi_{j_i}^{(i)}-1\right) = \bone\,.
\end{align*}
The linear dependence of $\nabla_i\Pi_i$ and $\bone$ is equivalent to imposing that the rank of the $d_i \times 2$ matrix $\begin{pmatrix}\nabla_i\Pi_i &\bone\end{pmatrix}$ is at most one.
For all $n \ge 2$, the conditions $\rank\begin{pmatrix}\nabla_i\Pi_i &\bone\end{pmatrix}\le 1$ yield a system of multihomogeneous polynomials in the $n$ vectors $\pi^{(i)}$. Thus, we are studying the solutions of a polynomial system in the multiprojective strategy space $\PP^\bd$. For all $i\in[n]$ and for all indices $r,s$ in $[d_i]$ with $r<s$, let $f_{r,s}^{(i)}$ be the $2\times 2$ minor of $\begin{pmatrix}\nabla_i\Pi_i &\bone\end{pmatrix}$ obtained by selecting the rows $r$ and $s$. In particular $f_{r,s}^{(i)}=f_{1,s}^{(i)}-f_{1,r}^{(i)}$. For this reason, we have that $\rank\begin{pmatrix}\nabla_i\Pi_i &\bone\end{pmatrix}\le 1$ if and only if $\sigma_k^{(i)}\coloneqq f_{1,k+1}^{(i)}=0$ for all $k\in[d_i-1]$ and $i\in[n]$. This leads to the following definition.

\begin{definition}[{\cite[Definition 2.6]{abo2026vector}}]\label{def: Nash equilibria scheme}
Let $G=(G^{(1)},\ldots,G^{(n)})\in V^{\oplus n}$. For all $i \in [n]$, let $J_i$ be the ideal in $R$ generated by $\sigma_1^{(i)},\ldots,\sigma_{d_i-1}^{(i)}$.
The {\em Nash equilibrium scheme} of $G$ is the subscheme $\kz_G\subseteq\PP^\bd$ defined by the ideal $J\coloneqq \sum_{i=1}^n J_i$.
\end{definition}

It follows immediately that $\bpi=(\pi^{(1)},\ldots,\pi^{(n)}) \in \prod_{i=1}^{d_i} \Delta_{d_i-1}^\circ$ is a totally mixed Nash equilibrium of the game $G=(G^{(1)},\ldots,G^{(n)})$ if the corresponding point~$[\bpi]\coloneqq([\pi^{(1)}],\ldots,[\pi^{(n)}])$ of $\PP^\bd$ lies in the Nash equilibrium scheme $\kz_G$.

\medskip

In this paper, we frequently utilize tools from intersection theory and vector bundles over multiprojective spaces. Again, our notation is consistent with \cite[Section 2.3]{abo2026vector}.
The motivation to use these tools comes from the natural observation that $\kz_G$ is defined by multilinear polynomials in $n$ sets of homogeneous variables. Said formally, $\kz_G$ is the zero scheme of a global section $\sigma=(\sigma^{(1)},\ldots,\sigma^{(n)})\in H^0(\PP^\bd,E)$, where
\begin{equation}\label{eq: def vector bundle E}
    E=E_1\oplus\cdots\oplus E_n\,,\qquad E_i\coloneqq\ko_{\PP^{\bd}}(\bone_i)^{\oplus(d_i-1)}\quad\forall\,i\in[n]\,,
\end{equation}
a vector bundle of rank equal to $\dim\PP^{\bd}=\sum_{i=1}^n(d_i-1)$. When the global section $\sigma$ is generic, then its zero scheme $\kz_G$ is zero-dimensional and reduced, with cardinality given by the degree of the top Chern class of $E$, denoted by $c(\bd)$ in \cite[Theorem 2.8]{abo2026vector}.

The equations $\sigma_k^{(i)}$ used to define $\kz_G$ are multi-homogeneous in the entries of the game $G$: For a second game $G'$ such that there exist nonzero constants $\lambda_1,\ldots,\lambda_n\in \C^*$ with $G'=(\lambda_1 G^{(1)},\ldots,\lambda_nG^{(n)})$ we have $\kz_{G'}=\kz_G$. We therefore write $\kz_{[G]}\coloneq \kz_G$ for $[G]=([G^{(1)}],\ldots,[G^{(n)}])\in \PP(V)^n$.

\begin{definition}\label{def: Nash locus}
Consider a subvariety $Y\subseteq\PP^\bd$. The {\em Nash locus of $Y$} is
\begin{equation}
    \kn(Y)\coloneqq\overline{\{[G]\in \PP(V)^n\mid \kz_{[G]}\cap Y\neq\emptyset\}}\subseteq \PP(V)^n\,.
\end{equation}
\end{definition}

The previous definition is, of course, completely algebraic, but the motivation comes from Game Theory. Indeed, in the computation of Nash equilibria, there might be scenarios in which the players can choose a strategy by mixing $d_i$ pure strategies, but the way of mixing cannot be arbitrary. In other words, the family of probability distributions allowed is a proper semialgebraic subset of $\prod_{i=1}^{d_i} \Delta_{d_i-1}$. Its Zariski closure in the multiprojective strategy space $\PP^\bd$ is the variety $Y$ written above.

\medskip

In the rest of this section, we present a few examples that motivate Definition~\ref{def: Nash locus}.

\begin{example}\label{ex: really one point}
Assume $Y=\{[\bar{\bpi}]\}\subseteq \PP^\bd$ consists of just one point. If $\bar{\bpi}=(\bar{\pi}_1,\ldots,\bar{\pi}_n)\in\prod_{i=1}^n\Delta_{d_i-1}^\circ$, then $\kn(Y)$ is the variety of all games $[G]$ such that $\bar{\bpi}$ is a totally mixed Nash equilibrium. In general for any $[\bar{\bpi}]\in \PP^\bd$ the Nash locus $\kn(Y)$ is linear of codimension $\sum_{i=1}^n(d_i-1)$. This follows easily from the fact that the entries of $\Delta_i\Pi_i$ are linear in the entries of $G^{(i)}$, and that the variables appearing in entry $i$ and in entry $j\neq i$ are disjoint.
\end{example}

\begin{example}\label{ex: one hyperplane}
Consider $n=2$ players with an equal number of pure strategies $d_1=d_2=d$. This time we fix $Y_1$ to be the hyperplane in $\PP^{d-1}$ of equation $\pi_1^{(1)}-\pi_2^{(1)}=0$ and consider the subvariety $Y=Y_1\times\PP^{d-1}\subseteq\PP^\bd$.
In other words, we are interested in games such that, for some totally mixed Nash equilibrium, the first player is forced to choose a mixed strategy $\pi^{(1)}$ that assigns the same probability to the first two decisions.
In this case $\kn(Y)$ is a hypersurface of bidegree $(0,d-1)$ within $\PP(V)^2$, defined by the equation
\[
\det
\begin{pmatrix}
1 & \cdots & 1\\
g_{21}^{(2)} & \cdots & g_{2d}^{(2)}\\
\vdots & & \vdots\\
g_{d1}^{(2)} & \cdots & g_{dd}^{(2)}
\end{pmatrix}
-
\det
\begin{pmatrix}
g_{11}^{(2)} & \cdots & g_{1d}^{(2)}\\
1 & \cdots & 1\\
\vdots & & \vdots\\
g_{d1}^{(2)} & \cdots & g_{dd}^{(2)}
\end{pmatrix}
=0\,.
\]
The structure of these equations involving determinants is no coincidence. We discuss this phenomenon in more detail in Proposition \ref{prop:eq for 2players}.
\end{example}

The first two examples are ``less'' interesting, meaning that their study reduces to the study of {\em Nash resultants}; see \cite[\S 3.4]{abo2026vector}. We now give an example with a nonlinear subvariety $Y$.

\begin{example}\label{ex: one conic}
Consider $n=2$ players again with an equal number of pure strategies $d_1=d_2=3$. Let $Y_1$ be the conic in $\PP^2$ of equation $2(\pi_2^{(1)})^2-\pi_1^{(1)}\pi_3^{(1)}=0$ and consider $Y=Y_1\times\PP^2$. In this case, we are interested in games such that, for some totally mixed Nash equilibrium, the first player is forced to choose the mixed strategy $\pi^{(1)}$ using a binomial distribution $B(2,1/2)$. The conic $Y_1$ is the Zariski closure of the statistical model associated with $B(2,1/2)$. In this case, $\kn(Y)$ is a hypersurface. Its equation can be derived using the supplementary software \texttt{nashLoci.m2} available at \cite{sodomaco2026supplementary}, via the function \texttt{idealNashLocus}. The unique generator of \texttt{INL} in Figure~\ref{fig: idealNashLocus} is a polynomial of bidegree $(0,4)$ with $54$ terms in the variables of $G^{(2)}$. We investigate this polynomial further in Example~\ref{ex: one conic continued}.
\end{example}
\begin{figure}[h]
\centering
\begin{tcolorbox}[size=fbox,width=.6\linewidth,colback=blue!5!white,colframe=blue!75!black]
\begin{Verbatim}[fontsize=\small,commandchars=\\\{\}]
R = QQ[x_1..x_3]**QQ[y_1..y_3];
xx = matrix\{\{x_1..x_3\}\}; yy = matrix\{\{y_1..y_3\}\};
IY = ideal(2*x_2^2-x_1*x_3);
INL = idealNashLocus(\{3,3\},IY);
\end{Verbatim}
\end{tcolorbox}
\caption{Commands for \texttt{idealNashLocus} given the ideal of $Y$.}\label{fig: idealNashLocus}
\end{figure}

\section{Codimensions and degrees of Nash loci}\label{sec: codim degree}
 
To relate the multidegree of the Nash locus $\kn(Y)$ to that of $Y$, we will work in the Chow rings of $\PP^\bd$, $\PP(V)^n$ and their product $\PP^\bd\times \PP(V)^n$ respectively. For these we write
\begin{align*}
    A^*(\PP^\bd)\ &=\ \Z[h_1,\dots,h_n]/\langle h_1^{d_1},\dots,h_n^{d_n}\rangle\\
    A^*(\PP(V)^n)\ &=\ \Z[t_1,\dots,t_n]/\langle t_1^{d_1\cdots d_n},\dots,t_n^{d_1\cdots d_n}\rangle\\
    A^*(\PP^\bd\times \PP(V)^n)\ &=\ \Z[h_1,\ldots,h_n,t_1,\ldots,t_n]/\langle h_1^{d_1},\ldots, h_n^{d_n},t_1^{d_1\cdots d_n},\ldots,t_n^{d_1\cdots d_n}\rangle\,.
\end{align*}
Here $t_i$ is the pullback of the hyperplane class in the $i$-th factor of $\PP^\bd$ and $h_i$ is the pullback of the hyperplane class in the $i$-th factor of $\PP(V)^n$.

For a subvariety $Y\subseteq\PP^\bd$ we write $[Y]\in A^*(\PP^\bd)$ for its multidegree. When $Y$ is a hypersurface, we also denote the multidegree by an $n$-tuple of numbers rather than a linear polynomial in $n$ variables. In the following statement, given a polynomial $f\in\Z[h_1,\ldots,h_n,t_1,\ldots,t_n]$, the coefficient of the monomial $h^\alpha$ in $f$ means the polynomial $g_\alpha \in \Z[t_1,\ldots,t_n]$ when writing $f=\sum_{\beta}g_\beta(t_1,\ldots,t_n)h^\beta$.

\begin{theorem}\label{thm: codim degree Nash locus}
Assume that $d_1\le\cdots\le d_n$ and that $d_n-1\le\sum_{i=1}^{n-1}(d_i-1)$. Let $Y\subset\PP^\bd$ be an irreducible variety of codimension $c$ and multidegree
\[
    [Y]\ =\ \sum_{|\alpha|=c}\delta_\alpha\,h_1^{\alpha_1}\cdots h_n^{\alpha_n}\in A^*(\PP^\bd)\,.
\]
The Nash locus $\kn(Y)$ is irreducible of the same codimension $c$ in $\PP(V)^{n}$. Its multidegree is the coefficient of the monomial $h_1^{d_1-1}\cdots h_n^{d_n-1}$ in the polynomial
\[
    [Y]\prod_{i=1}^n(\widehat{h}_i+t_i)^{d_i-1}\,,\qquad\widehat{h}_i\ \coloneqq\ \sum_{j\neq i}h_i\,.
\]
\end{theorem}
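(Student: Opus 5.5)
The plan is to work with the incidence correspondence
\[
\Sigma\coloneqq\{([\bpi],[G])\in \PP^\bd\times\PP(V)^n \mid [\bpi]\in\kz_{[G]}\}\,,
\]
and its restriction $\Sigma_Y\coloneqq\Sigma\cap(Y\times\PP(V)^n)$. Let $p_1,p_2$ be the two projections. Then $\kn(Y)=\overline{p_2(\Sigma_Y)}$.

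First I will analyse the fibres of $p_1$. For a fixed $[\bpi]$, the conditions $\sigma_k^{(i)}=0$ are linear in the entries of $G^{(i)}$, and the blocks for different $i$ involve disjoint variables, as noted in Example~\ref{ex: really one point}. The $d_i-1$ linear forms in $G^{(i)}$ are independent whenever $\pi^{(\ell)}\neq0$ for all $\ell$. Indeed, the coefficient of $g^{(i)}_{\bj}$ in $\sigma_k^{(i)}$ is a difference of products of coordinates of the other $\pi^{(\ell)}$'s, and these differences are independent for distinct $k$. So $p_1\colon\Sigma\to\PP^\bd$ is a projective bundle: on each factor we get a product of linear subspaces of codimension $d_i-1$ in $\PP(V)$. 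Hence $\Sigma_Y$ is irreducible of dimension $\dim Y+\sum_i(\dim\PP(V)-(d_i-1))$, which is $\dim\PP(V)^n-c$. It follows that $\kn(Y)$ is irreducible, and it has codimension $c$ exactly when $p_2|_{\Sigma_Y}$ is generically finite.

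Generic finiteness is the step I expect to be the main obstacle. This is where the hypothesis $d_n-1\le\sum_{i<n}(d_i-1)$ enters: by \cite[Theorem 2.8]{abo2026vector}, under this hypothesis the top Chern class $c(\bd)$ is positive and a generic game has finitely many (reduced) equilibria. For $Y$ I need more, namely that a generic point of $\kn(Y)$ has only finitely many equilibria on $Y$. I would prove this by showing that the fibre of $p_2|_{\Sigma_Y}$ over a general point of $\Sigma_Y$ is finite. The first attempt is a dimension count: take a general $[\bpi]\in Y$ and a general $[G]$ in the linear fibre $p_1^{-1}([\bpi])$. I would then argue that $\kz_{[G]}$ is zero-dimensional at $[\bpi]$ by exhibiting one game in that fibre with $[\bpi]$ an isolated reduced zero, and use upper semicontinuity of fibre dimension. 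One way to get such a game is to use the torus action rescaling coordinates, which acts compatibly on $\PP^\bd$ and on games. This reduces to $[\bpi]=[\bone]$, where one can take a generic game with $\bone$ as an equilibrium. Its zero scheme is finite by the Chern class count, because $c(\bd)>0$ forces a nonempty finite generic zero locus in the transitive family. The differential at $[\bpi]$ is nondegenerate since the bundle $E$ is globally generated by these sections.

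For the multidegree, since $p_2|_{\Sigma_Y}$ is generically finite, I will compute $(p_2)_*[\Sigma_Y]$ and then show the degree of the map is one. The class of $\Sigma$ in $\PP^\bd\times\PP(V)^n$ is the top Chern class of the bundle $\bigoplus_i \ko(\widehat h_i+t_i)^{\oplus(d_i-1)}$. Here $\sigma_k^{(i)}$ has degree $1$ in each $\pi^{(\ell)}$ with $\ell\neq i$ and degree $1$ in $G^{(i)}$; I read the statement's $\widehat h_i$ as $\sum_{j\ne i}h_j$. This bundle has the expected codimension by the fibre computation above, so
\[
[\Sigma]=\prod_i(\widehat h_i+t_i)^{d_i-1}\,,
\]
and $[\Sigma_Y]=[Y]\cdot[\Sigma]$ by a proper intersection argument (generic transversality via the bundle structure). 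Pushing forward along $p_2$ extracts the coefficient of $h_1^{d_1-1}\cdots h_n^{d_n-1}$. Finally, birationality of $\Sigma_Y\to\kn(Y)$ follows if a general game in $\kn(Y)$ has exactly one equilibrium on $Y$. This holds because the fibre over a general point of $Y$ is a linear family of games whose other equilibria move and can be shown to avoid $Y$ generically, by the same torus/genericity argument.
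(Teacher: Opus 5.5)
Your architecture is the paper's: $\Sigma_Y\to Y$ is a projective bundle, its class is $[Y]\prod_i(\widehat h_i+t_i)^{d_i-1}$, and the multidegree comes from the pushforward. You also read $\widehat h_i$ correctly as $\sum_{j\neq i}h_j$.

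\textbf{The gap: degree one.} The genuine gap is the claim at the end that $\Sigma_Y\to\kn(Y)$ has degree one, which is the whole content of the multidegree formula. The sentence ``other equilibria move and avoid $Y$ by the same torus/genericity argument'' does not prove it. Homogeneity only controls the equilibrium $[\bpi]$ you fixed, not the remaining $c(\bd)-1$ points of $\kz_{[G]}$. Nor can degree one follow from genericity alone, because it is false for $Y=\PP^\bd$. There $c=0$, the map $\Sigma\to\PP(V)^n$ has degree $c(\bd)$ (which is $2$ for $\bd=(2,2,2)$), and the formula returns $c(\bd)$ instead of $1$. So any argument must use $c\ge1$.

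One way to close it is a dimension count on pairs of equilibria in $Y$.
\begin{itemize}
\item Suppose $[\bpi]\neq[\bpi']$ in $Y$ differ in at least two factors. Then for every $i$ the tensors $\bigotimes_{k\neq i}\pi^{(k)}$ and $\bigotimes_{k\neq i}\pi'^{(k)}$ are linearly independent. Requiring both points to lie in $\kz_{[G]}$ therefore imposes $2(d_i-1)$ independent linear conditions on $G^{(i)}$. Such triples form a set of dimension at most $2\dim Y+\dim\PP(V)^n-2\dim\PP^\bd=\dim\PP(V)^n-2c$, which is smaller than $\dim\Sigma_Y$ precisely when $c\ge1$.
\item Suppose they differ only in one factor $\ell_0$. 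Then $\sigma^{(\ell_0)}$ does not involve $\pi^{(\ell_0)}$, while the other $\sigma^{(i)}$ are linear in it. So $\kz_{[G]}$ contains the whole line joining $[\pi^{(\ell_0)}]$ and $[\pi'^{(\ell_0)}]$ in the $\ell_0$-th factor, with the other factors fixed. Hence $[\bpi]$ is not isolated, which your finiteness step excludes at general points of $\Sigma_Y$.
\end{itemize}
Therefore a general $([\bpi],[G])\in\Sigma_Y$ has $\kz_{[G]}\cap Y=\{[\bpi]\}$. The paper instead certifies finiteness and degree one at once with a single explicit McKelvey--McLennan type game: all $\sigma_k^{(i)}$ are products of linear forms, there are $c(\bd)$ reduced zeros, and exactly one lies on $Y$.

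\textbf{The finiteness step.} Using homogeneity plus semicontinuity for generic finiteness is a reasonable alternative to that explicit game, but the torus is the wrong group. It only moves points with all coordinates nonzero to $[\bone]$, so it says nothing when $Y$ lies in a coordinate hyperplane, e.g.\ the three-player example $Y=\{[0:1]\}\times\PP^1\times\PP^1$ at the end of Section~\ref{sec: codim degree}.

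Use $\prod_\ell\mathrm{GL}(V_\ell)$ instead. Let $g_\ell$ act on $\pi^{(\ell)}$, and by $g_\ell^{-1}$ on the $\ell$-th index of each $G^{(i)}$ with $i\neq\ell$. This leaves every $\nabla_i\Pi_i$ unchanged, since the vector $\bone$ sits on the untouched $i$-th index. So the action preserves $\Sigma$ and is transitive on $\PP^\bd$.

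Now consider the open set of points of $\Sigma$ at which $[\bpi]$ is isolated in $\kz_{[G]}$. It is nonempty because $c(\bd)>0$ makes $p_2\colon\Sigma\to\PP(V)^n$ dominant between varieties of equal dimension. Being invariant, it surjects onto $\PP^\bd$ and hence meets $\Sigma_Y$.

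Finally, global generation of $E$ does not give nondegeneracy at a prescribed zero, but you never need reducedness there.
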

\begin{proof}
Consider the incidence variety
\begin{equation}\label{eq: incidence variety}
    N \coloneqq \overline{\{([\bpi],[G])\in\PP^\bd\times \PP(V)^n\mid \bpi\in \kz_{[G]}\cap Y\}}
\end{equation}
and denote by $\varphi_1$ and $\varphi_2$ the restrictions to $N$ of the projections onto the two factors of $\PP^\bd\times\PP(V)^n$. On the one hand, the image of $\varphi_1$ is $Y$. On the other hand, the image of $\varphi_2$ is $\kn(Y)$.

Observe that by Example~\ref{ex: really one point} the morphism $\varphi_1$ has linear fibers of constant dimension $\dim\PP(V)^n-\sum_{i=1}^n(d_i-1)=\dim\PP(V)^n-\dim \PP^\bd$ over all points $[\bpi]\in Y$. This makes $N$ a rank $\dim\PP(V)^n-\dim \PP^\bd$ vector bundle over $Y$. Since $Y$ is irreducible, we conclude that $N$ is irreducible of dimension $\dim\PP(V)^n-\dim \PP^\bd +\dim Y=\dim\PP (V)^n-c$. 

We claim that the second morphism $\varphi_2$ is birational. To see this, fix a point $[\bpi]\in Y$. Using the same approach as in \cite[Theorem 4.1]{mckelvey1997maximal} (see also the proof of \cite[Lemma 3.5]{abo2026vector}), it is possible to find a game $G\in V^{\oplus n}$ such that $\kz_G$ is the zero scheme of a global section $\sigma$ of the vector bundle $E$ in \eqref{eq: def vector bundle E} with the following properties: all components $\sigma_k^{(i)}$ of $\sigma$ are products of linear polynomials in the $n$ sets of variables of $\PP^\bd$; $\kz_G$ is zero-dimensional and reduced of the maximal cardinality $c(\bd)$, and finally, exactly one of these $c(\bd)$ points lies on $Y$. The last two properties use the assumption $d_n-1\le\sum_{i=1}^{n-1}(d_i-1)$. 
Therefore $\kn(Y)$ is irreducible, as the image of an irreducible variety, and by finiteness of $\varphi_2$ we get $\dim \kn(Y)=\dim N=\dim \PP(V)^n-c$. 

Notice that the incidence variety $N$ is the transverse intersection of $Y$ with all the equations $\sigma_k^{(i)}$ where $i\in[n]$ and $k\in[d_i-1]$. Since $\sigma_k^{(i)}$ is linear in the variables appearing in $G^{(i)}$ and linear in the variables in $\pi^{(j)}$ for $j\neq i$, we get that
\begin{align*}
    [N]=[Y]\prod_{i=1}^n\left(t_i+\sum_{j\neq i}h_j\right)^{d_i-1} =\ [Y]\prod_{i=1}^n(\widehat{h}_i+t_i)^{d_i-1}\in A^*(\PP^\bd \times \PP(V)^n)\,.
\end{align*}
The multidegree of $\kn(Y)$ is a homogeneous polynomial in the variables $t_1,\ldots,t_n$ of degree $c=\codim_{\PP(V)^n}\kn(Y)$. Since the map $\varphi_2$ is birational, it follows from the push-pull formula that the coefficient in front of a monomial $t^\alpha$ equals the coefficient in front of the monomial $h_1^{d_1-1}\cdots h_n^{d_n-1}t^\alpha$ in $[\kn(Y)]$. 
\end{proof}

Note that the previous formula coincides with \cite[Theorem 2.8]{abo2026vector} when $Y=\PP^\bd$, namely when no constraint is imposed. For two players and arbitrary $Y$ this specializes to the following formula.

\begin{cor}\label{cor: multidegree two factors}
Let $n=2$ and $Y\subseteq\PP^\bd=\PP^{d_1-1}\times\PP^{d_2-1}$ of codimension $c$ and multidegree
\[
    [Y] = \sum_{k=0}^c b_k\,h_1^kh_2^{c-k}\,,
\]
where we set $b_k=0$ whenever $k\geq d_1$ or $r-k\geq d_2$. Then (assuming $\binom{a}{b}=0$ for $b>a$)
\[
\codim\kn(Y)\ =\ c\quad\text{and}\quad[\kn(Y)]\ =\ \sum_{k=0}^c\binom{d_1-1}{c-k}\binom{d_2-1}{k}b_k\,t_1^kt_2^{c-k}\,.
\]
\end{cor}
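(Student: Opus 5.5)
The plan is to obtain the corollary as a direct specialization of Theorem~\ref{thm: codim degree Nash locus} to $n=2$. The codimension claim $\codim\kn(Y)=c$ is immediate from the theorem. Its hypothesis $d_n-1\le\sum_{i<n}(d_i-1)$ reads $d_2\le d_1$ after ordering, so for $n=2$ it forces $d_1=d_2$ up to relabelling. I will record this, but still carry out the coefficient extraction for arbitrary $d_1,d_2$, since the conventions $b_k=0$ and $\binom{a}{b}=0$ in the statement are designed for that. All the content is in computing the coefficient of $h_1^{d_1-1}h_2^{d_2-1}$ in
\[
[Y]\,(\widehat h_1+t_1)^{d_1-1}(\widehat h_2+t_2)^{d_2-1}\,,\qquad [Y]=\sum_{k=0}^c b_k\,h_1^kh_2^{c-k}\,.
\]

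First I will fix what $\widehat h_i$ is for $n=2$. The proof of the theorem shows that the factor carrying $t_i$ comes from the equations $\sigma^{(i)}_k$, which are linear in $G^{(i)}$ and in $\pi^{(j)}$ for $j\neq i$. Hence $\widehat h_1=h_2$ and $\widehat h_2=h_1$. Then I expand by the binomial theorem and collect terms term by term in $k$:
\[
b_k\,h_1^kh_2^{c-k}\sum_{a,b}\binom{d_1-1}{b}\binom{d_2-1}{a}h_2^{b}h_1^{a}\,t_1^{d_1-1-b}t_2^{d_2-1-a}\,.
\]
The exponent of $h_1$ must equal $d_1-1$, which forces $a=d_1-1-k$. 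The exponent of $h_2$ must equal $d_2-1$, which forces $b=d_2-1-c+k$. Substituting gives a single contribution for each $k$, namely $b_k$ times a product of two binomial coefficients times a monomial in $t_1,t_2$ of total degree $c$. The conventions $b_k=0$ for $k\ge d_1$ or $c-k\ge d_2$, together with $\binom ab=0$ for $b>a$, automatically discard the terms where $a$ or $b$ would be negative or too large. This is why the sum can be written over all $0\le k\le c$.

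The last step is to match the result with the displayed formula. In the range allowed by the theorem, $d_1=d_2=d$, the two binomials become $\binom{d-1}{k}$ and $\binom{d-1}{c-k}$, exactly the coefficients in the statement. The monomial is $t_1^{c-k}t_2^{k}$, so the formula as written, with $b_k\,t_1^kt_2^{c-k}$, amounts to the opposite labelling of $t_1,t_2$. Under that labelling $t_i$ records the player whose strategy variables enter the factor, since $\widehat h_1$ is built from $h_2$.

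I expect this bookkeeping to be the main obstacle. One must decide which of $t_1,t_2$ is paired with which player's hyperplane class, and check that the vanishing conventions kill exactly the terms outside the range. To resolve the labelling, I will test the outcome against Example~\ref{ex: one hyperplane}. There $c=1$, $b_1=1$, $b_0=0$, and the Nash locus has bidegree $(0,d-1)$, so the correct pairing is the one producing $(d-1)$ times the class of the second factor. I will then adopt whichever labelling of $t_1,t_2$ reproduces this example, and state it explicitly, so that the corollary's formula is read consistently with it.
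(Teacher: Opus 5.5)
Your proposal is correct and is essentially the paper's argument: the corollary is obtained by specializing Theorem~\ref{thm: codim degree Nash locus} to $n=2$ with $\widehat h_1=h_2$, $\widehat h_2=h_1$, then extracting the coefficient of $h_1^{d_1-1}h_2^{d_2-1}$ term by term, exactly as you do. The two discrepancies you flag are genuine typos in the statement rather than labelling choices left to you: the hypothesis of the theorem forces $d_1=d_2$ (and for $d_1\neq d_2$ the printed claim is actually false, e.g.\ for $Y=\PP^\bd$ the Nash locus has codimension $|d_1-d_2|$), while the theorem already fixes $t_i$ as the class of the factor carrying $G^{(i)}$ (the paper's own application to Example~\ref{ex: one hyperplane} gives $(d-1)\,t_2$), so the correct output is $\sum_k\binom{d-1}{c-k}\binom{d-1}{k}b_k\,t_1^{c-k}t_2^{k}$, and the $r$ in the vanishing convention should read $c$.
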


\begin{example}
Let $d_1=d_2=d$ and let $Y=\Delta\subseteq\PP^{d-1}\times\PP^{d-1}$ be the diagonal. Then $[Y]=\sum_{k=0}^{d-1}h_1^kh_2^{d-1-k}$ and hence by Corollary~\ref{cor: multidegree two factors}
\[
    [\kn(Y)]\ =\ \sum_{k=0}^{d-1}\binom{d-1}{k}\binom{d-1}{d-1-k}t_1^kt_2^{d-1-k}\ =\ \sum_{k=0}^{d-1}\binom{d-1}{k}^2t_1^kt_2^{d-1-k}\,.
\]
The following identity can also be verified via the function {\tt degNashLocus} shown in Figure~\ref{fig: degNashLocus diagonal}, using either the ideal of $Y$ (on the left) or the multidegree of $Y$ (on the right, after defining the correct quotient ring) as input.
\end{example}

\begin{figure}[h]
\centering
\begin{minipage}{0.49\textwidth}
\begin{tcolorbox}[size=fbox,width=\linewidth,colback=blue!5!white,colframe=blue!75!black]
\begin{Verbatim}[fontsize=\scriptsize,commandchars=\\\{\}]
n = 2; d = 3;
R = tensor apply(n, i-> QQ[x_(i+1,1)..x_(i+1,d)]);
Delta = minors(2, genericMatrix(R,d,n));
dNL = degNashLocus(toList(n:d),Delta);
A = ring dNL;
B = sum(d, i->(binomial(d-1,i))^2*t_1^i*t_2^(d-1-i));
dNL == B
\end{Verbatim}
\end{tcolorbox}
\end{minipage}
\begin{minipage}{0.49\textwidth}
\begin{tcolorbox}[size=fbox,width=\linewidth,colback=blue!5!white,colframe=blue!75!black]
\begin{Verbatim}[fontsize=\scriptsize,commandchars=\\\{\}]
n = 2; d = 3;
S = ZZ[h_1..h_n]/ideal(apply(n, i-> h_(i+1)^d));
mdegDelta = sum first entries basis((n-1)*(d-1), S);
dNL = degNashLocus(toList(n:d),mdegDelta);
A = ring dNL;
B = sum(d, i->(binomial(d-1,i))^2*t_1^i*t_2^(d-1-i));
dNL == B
\end{Verbatim}
\end{tcolorbox}
\end{minipage}
\caption{Computation of $[\kn(Y)]$ for the diagonal $Y=\Delta\subseteq\PP^2\times\PP^2$. The code can be used for any diagonal in a product of $n$ projective spaces of the same dimension $d-1$.}\label{fig: degNashLocus diagonal}
\end{figure}

\begin{example}\label{ex: Y splits}
A natural case is when $Y$ is itself a product of subvarieties, namely when the constraints are not shared among the players.
In this case, one might derive a formula for the codimension and the degree of the Nash locus $\kn(Y)$ as a function of the degrees and the dimensions of the factors of $Y$.
Assume that $d_1\le\cdots\le d_n$ and that $d_n-1\le\sum_{i=1}^{n-1}(d_i-1)$. For every $i\in[n]$, let $Y_i\subset\PP^{d_i}$
be an irreducible projective variety of codimension $\
c_i$. Let $Y=\prod_{i=1}^n Y_i\subset\PP^\bd$.
The Nash locus $\kn(Y)$ is irreducible of codimension $c=\sum_{i=1}^nc_i$ in $\PP(V)^{n}$. Its multidegree is $\delta\cdot\prod_{i=1}^n\deg Y_i$, where $\delta$ is the coefficient of the monomial $h_1^{d_1-1}\cdots h_n^{d_n-1}$ in the polynomial $h_1^{c_1}\cdots h_n^{c_n}\prod_{i=1}^{n}(\widehat{h_i}+t_i)^{d_i-1}$, where $\widehat{h_i}\coloneqq\sum_{j\neq i}h_j$.
\end{example}

\begin{exampletwo}
We apply Theorem \ref{thm: codim degree Nash locus} to the  examples of Section \ref{sec: definition}:
\begin{enumerate}
    \item In Example \ref{ex: really one point}, we have $[Y]=h_1^{d_1-1}\cdots h_n^{d_n-1}$, hence $c=\prod_{i=1}^n(d_i-1)$ and $[\kn(Y)]=t_1^{d_1-1}\cdots t_n^{d_n-1}$.
    This confirms that $\kn(Y)$ is a linear subspace. See the script on the left-hand side of Figure~\ref{fig: degNashLocus}.
    \item In Example \ref{ex: one hyperplane}, we are in the setting of Example \ref{ex: Y splits} for $c_1=1$ and $c_2=0$, hence the codimension is $c=1$ and $[\kn(Y)]=(d-1)\,t_2$ is the coefficient of $h_1^{d-1}h_2^{d-1}$ in the polynomial $h_1(h_2+t_1)^{d-1}(h_1+t_2)^{d-1}=h_1(h_1h_2+h_1t_1+h_2t_2+t_1t_2)^{d-1}$. This confirms that $\kn(Y)$ is a hypersurface of bidegree $(0,d-1)$.
    \item In Example \ref{ex: one conic}, we are again in the setting of Example \ref{ex: Y splits} for $c_1=1$ and $c_2=0$, hence the codimension is $c=1$ and $[\kn(Y)]=4\,t_2$ is the coefficient of $h_1^2h_2^2$ in the polynomial $2\,h_1(h_2+t_1)^2(h_1+t_2)^2=2\,h_1(h_1h_2+h_1t_1+h_2t_2+t_1t_2)^2$. This confirms that $\kn(Y)$ is a hypersurface of bidegree $(0,4)$. For this and the previous case, see the script on the right-hand side of Figure~\ref{fig: degNashLocus}.\hfill$\diamond$
\end{enumerate}
\end{exampletwo}

\begin{figure}[h]
\centering
\begin{minipage}{0.48\textwidth}
\begin{tcolorbox}[size=fbox,width=\linewidth,colback=blue!5!white,colframe=blue!75!black]
\begin{Verbatim}[fontsize=\small,commandchars=\\\{\}]
D = \{2,3,4\};
R = ZZ[h_1..h_(#D)]/
(ideal apply(#D, i-> h_(i+1)^(D#i)));
y = product(#D, i-> h_(i+1)^(D#i-1));
degNashLocus(D,y)
\end{Verbatim}
\end{tcolorbox}
\end{minipage}
\begin{minipage}{0.48\textwidth}
\begin{tcolorbox}[size=fbox,width=\linewidth,colback=blue!5!white,colframe=blue!75!black]
\begin{Verbatim}[fontsize=\small,commandchars=\\\{\}]
D = \{3,3\};
R = ZZ[h_1..h_(#D)]/
(ideal apply(#D, i-> h_(i+1)^(D#i)))
y = 2*h_1
degNashLocus(D,y)
\end{Verbatim}
\end{tcolorbox}
\end{minipage}
\caption{Commands for {\tt degNashLocus} given the multidegree of $Y$ as input.}\label{fig: degNashLocus}
\end{figure}

In what follows, we focus on two-player games $G=(G^{(1)},G^{(2)})$ with equal numbers of pure strategies $d=d_1=d_2$. Given a matrix $A$, we denote by $A_i$ the matrix obtained by replacing the $i$-th column of $A$ with a vector of ones. If $A$ is a nonsingular square matrix of size $d$ and $x=(x_1,\ldots,x_d)^\mT$, then $\rank\begin{pmatrix}Ax &\bone\end{pmatrix}\le 1$ if and only if $x^\mT$ is proportional to $\bar{x}\coloneqq(\det(A_1),\ldots,\det(A_d))$ by Cramer's rule.

\begin{proposition}\label{prop:eq for 2players}
Let $n=2$ and $d\coloneqq d_1=d_2$. Let $Y \subseteq \PP^{d-1}\times \PP^{d-1}$ be an irreducible subvariety whose prime ideal is $\langle f_1,\ldots,f_s\rangle\subseteq R=R_1\otimes R_2$. The Nash locus $\kn(Y)$ is set-theoretically cut out by 
\[
    \left\langle f_1(\bar{\pi}^{(1)},\bar{\pi}^{(2)}),\ldots,f_s(\bar{\pi}^{(1)},\bar{\pi}^{(2)})\right\rangle:\left(\langle \bar{\pi}^{(1)}_1,\dots,\bar{\pi}^{(1)}_d\rangle \cap \langle\bar{\pi}^{(2)}_1,\dots,\bar{\pi}^{(2)}_d\rangle\right)^\infty\subseteq S=S_1\otimes S_2,
\]
where $\bar{\pi}^{(1)} = (\det(G_1^{(2)}),\ldots,\det(G_d^{(2)}))$ and $\bar{\pi}^{(2)} = (\det(G_1^{(1)}),\ldots,\det(G_d^{(1)}))$.

Those games that admit a totally mixed Nash equilibrium on $Y$ are obtained from the real points of $\kn(Y)$ by imposing the additional semialgebraic conditions $\det(G_i^{(1)})>0$ and $\det(G_i^{(2)})>0$ for all $i\in[d]$.
\end{proposition}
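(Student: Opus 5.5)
Let me first describe the Nash equilibrium scheme of a two-player game explicitly. For $n=2$ we have $\nabla_1\Pi_1=G^{(1)}\pi^{(2)}$ and $\nabla_2\Pi_2=(G^{(2)})^\mT\pi^{(1)}$. The conditions $\rank\begin{pmatrix}\nabla_i\Pi_i&\bone\end{pmatrix}\le 1$ therefore decouple: one is a condition on $\pi^{(2)}$ alone, governed by $G^{(1)}$, and the other is a condition on $\pi^{(1)}$ alone, governed by $G^{(2)}$. The Cramer's rule remark before the statement applies to each of them. If $G^{(1)}$ is nonsingular, the first condition holds if and only if $[\pi^{(2)}]=[\bar\pi^{(2)}]$ with $\bar\pi^{(2)}=(\det G^{(1)}_1,\dots,\det G^{(1)}_d)$, provided this vector is nonzero. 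The analogous statement, with the transpose convention fixed so that it matches the indexing in the statement, gives $[\pi^{(1)}]=[\bar\pi^{(1)}]$. One should check carefully that $\det$ of a matrix with a column replaced equals $\det$ of the transposed matrix with the corresponding row replaced; this is where the convention of the statement comes in.

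So on the open set $U\subseteq\PP(V)^2$ where both $G^{(i)}$ are nonsingular and both $\bar\pi^{(i)}\neq 0$, the scheme $\kz_{[G]}$ is the single reduced point $([\bar\pi^{(1)}],[\bar\pi^{(2)}])$. Hence $\kn(Y)\cap U=\{[G]\in U \mid f_j(\bar\pi^{(1)},\bar\pi^{(2)})=0 \ \forall j\}$. Because the $f_j$ are bihomogeneous, these equations are well defined on $\PP(V)^2$.

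Next I pass to closures. Since $\kn(Y)$ is irreducible by Theorem~\ref{thm: codim degree Nash locus} (for $n=2$ and $d_1=d_2$ the hypothesis holds) and is not contained in the complement of $U$, it equals the closure of $\kn(Y)\cap U$. Then I have to identify the right saturation.
\begin{itemize}
\item Let $I=\langle f_j(\bar\pi^{(1)},\bar\pi^{(2)})\rangle$. The variety $V(I)$ contains the closure of $\kn(Y)\cap U$, together with spurious components lying in $\{\bar\pi^{(1)}=0\}\cup\{\bar\pi^{(2)}=0\}$. The ideal of that union has radical equal to the radical of $\langle\bar\pi^{(1)}\rangle\cap\langle\bar\pi^{(2)}\rangle$.
\item Saturating by this ideal removes exactly the components contained in that locus.
\item The singular locus $\det G^{(i)}=0$ does not need to be saturated away. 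By the Laplace expansion $\sum_k\det G^{(1)}_k=\det G^{(1)}$ (up to the column-sum identity), a generic singular $G^{(1)}$ still has $\bar\pi^{(2)}\neq 0$. Moreover $\bar\pi^{(2)}$ is exactly the vector of $d\times d$ minors of the $d\times(d+1)$ matrix $\begin{pmatrix}G^{(1)}&\bone\end{pmatrix}$, which spans its kernel direction whenever that matrix has full rank. So the description via $\bar\pi$ remains valid on the larger open set where $\bar\pi^{(1)}\neq0$ and $\bar\pi^{(2)}\neq0$.
\end{itemize}
With this last point, I replace $U$ by this larger set $U'$. On $U'$ the kernel argument directly gives $\kz_{[G]}=\{([\bar\pi^{(1)}],[\bar\pi^{(2)}])\}$. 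Consequently $V(I:(\cdots)^\infty)$ is the closure of $V(I)\cap U'=\kn(Y)\cap U'$, which is $\kn(Y)$.

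The main obstacle is the kernel claim on all of $U'$, and making sure that no component of $V(I)$ meeting $U'$ fails to come from $\kn(Y)$; set-theoretically this is immediate from the pointwise description. A secondary point is the possibility of points with $\bar\pi^{(i)}=0$ where $\kz$ is positive-dimensional and meets $Y$. These are limits and lie in the closure, but to be safe I will only assert the statement set-theoretically, matching the proposition.

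For the semialgebraic part, a real game in $U'$ has its unique candidate equilibrium at $[\bar\pi^{(1)}],[\bar\pi^{(2)}]$. This point lies in $\Delta^\circ\times\Delta^\circ$ after normalization if and only if each $\bar\pi^{(i)}$ has all coordinates of one sign. Since the $G^{(i)}$ are only defined up to scalar, I can rescale, and a sign flip of $G^{(i)}$ multiplies $\bar\pi$ by $(-1)^{d-1}$. I would therefore phrase the condition as positivity after a suitable choice of representative, which is the condition $\det(G_i^{(1)})>0$ and $\det(G_i^{(2)})>0$ in the statement.
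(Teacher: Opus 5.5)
Your argument for the set-theoretic description follows the paper's route: Cramer's rule on the nondegenerate locus, then saturation to discard $\{\bar\pi^{(1)}=0\}\cup\{\bar\pi^{(2)}=0\}$. It is tighter than the paper's. You work on $U'=\{\bar\pi^{(1)}\neq 0,\ \bar\pi^{(2)}\neq 0\}$, which is exactly the complement of $V(J)$, so $V(I:J^\infty)=\overline{V(I)\setminus V(J)}$ applies directly. The paper's ``other inclusion'' only literally covers points with $\bar\pi^{(i)}\neq 0$.

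Two small repairs are needed.
\begin{enumerate}
\item The identity $\sum_k\det G^{(1)}_k=\det G^{(1)}$ is false: for $G^{(1)}=I_2$ the left side is $2$. Drop it. What you actually use is the polynomial identity $G^{(1)}\bar\pi^{(2)}=\det(G^{(1)})\,\bone$, which holds because $\bar\pi^{(2)}=\mathrm{adj}(G^{(1)})\bone$. You also use that the entries of $\bar\pi^{(2)}$ are, up to sign, maximal minors of $(G^{(1)}\mid\bone)$. Hence $\bar\pi^{(2)}\neq 0$ forces rank $d$, and the kernel is then spanned by $(\bar\pi^{(2)},-\det G^{(1)})$.
\item The claim that $\kn(Y)$ is not contained in the complement of $U$ needs its one-line proof. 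For $[\bpi]\in Y$, pick invertible payoff matrices sending $\pi^{(2)}$, resp.\ $\pi^{(1)}$, to $\bone$, in your transpose convention.
\end{enumerate}

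The genuine gap is the last step of the semialgebraic part. You correctly derive that a real game in $U'$ has a totally mixed equilibrium on $Y$ if and only if $[G]\in\kn(Y)$ and each $\bar\pi^{(j)}$ has all entries nonzero and of a common sign. You cannot then turn ``common sign'' into ``positive'' by choosing a representative. The entries of $\bar\pi^{(j)}$ are homogeneous of degree $d-1$ in the relevant payoff matrix, so $G\mapsto\lambda G$ multiplies them by $\lambda^{d-1}$. For odd $d$ this is positive for every real $\lambda\neq 0$.

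Here is a concrete counterexample. Take $d=3$, $G^{(1)}$ the permutation matrix of the transposition $(1\,2)$, $G^{(2)}=I_3$, and $Y$ the diagonal. Then $\bar\pi^{(2)}=\det(G^{(1)})(G^{(1)})^{-1}\bone=(-1,-1,-1)$ for every real representative. Yet $([1:1:1],[1:1:1])$ is a genuine totally mixed equilibrium lying on $Y$. So for odd $d$ the printed condition $\det(G^{(j)}_i)>0$ is only sufficient. What is provable, and what your argument gives before its final sentence, is the common-sign condition.

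You should also say explicitly that the description holds only on $U'$; outside it, it fails. For example, $G^{(1)}=G^{(2)}=\bone\bone^{\mT}$ gives $\kz_{[G]}=\PP^{d-1}\times\PP^{d-1}$, so every real interior point of $Y$ is a totally mixed equilibrium, while every $\det G^{(j)}_i$ vanishes. The paper's own proof does not address this paragraph at all. The right fix is to state and prove the corrected version (common sign, on $U'$) rather than force the printed wording.
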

\begin{proof}
Given a game $G=(G^{(1)},G^{(2)})$, the condition of a point $[\bpi]=([\pi^{(1)}],[\pi^{(2)}])\in\PP^{d-1}\times \PP^{d-1}$ to be in the Nash equilibrium scheme $\kz_{[G]}$ is by definition
\[
    \rank\begin{pmatrix}G^{(2)}\pi^{(1)} &\bone\end{pmatrix}\le 1\,,\quad\quad\rank\begin{pmatrix}G^{(1)}\pi^{(2)} &\bone\end{pmatrix}\le 1\,.
\]
Assuming that $G^{(1)}$ and $G^{(2)}$ both have full rank, by Cramer's rule this is equivalent to $(\pi^{(1)},\pi^{(2)})=(\bar{\pi}^{(1)},\bar{\pi}^{(2)})$. Hence such full rank games belong to $\kn(Y)$ precisely when the defining equations of $Y$ vanish at $(\bar{\pi}^{(1)},\bar{\pi}^{(2)})$ and $(\bar{\pi}^{(1)},\bar{\pi}^{(2)})$ is a well-defined point in biprojective space, i.e. not all entries of either component vanish. A generic game $[G]\in\kn(Y)$ has a zero-dimensional, reduced Nash equilibrium scheme $\kz_{[G]}$ of cardinality 1. This can be verified by the same argument used in the proof of Theorem \ref{thm: codim degree Nash locus}. Therefore $\kn(Y)$ is contained in the variety cut out by the saturation of $\langle f_1(\bar{\pi}^{(1)},\bar{\pi}^{(2)}),\ldots,f_s(\bar{\pi}^{(1)},\bar{\pi}^{(2)})\rangle$ in $\langle \bar{\pi}^{(1)}_1,\dots,\bar{\pi}^{(1)}_d\rangle \cap \langle\bar{\pi}^{(2)}_1,\dots,\bar{\pi}^{(2)}_d\rangle$. For the other inclusion, simply note that if some $G$ is in the variety cut out by these equations, then $(\bar{\pi}^{(1)},\bar{\pi}^{(2)})$ belongs to $\kz_{[G]}\cap Y$.
\end{proof}

\begin{example}\label{ex: one conic continued}
We come back to Example \ref{ex: one conic} where $d=d_1=d_2=3$, $Y_1\subseteq\PP^2$ is the conic of equation $(\pi_2^{(1)})^2-\pi_1^{(1)}\pi_3^{(1)}=0$, and $Y=Y_1\times\PP^2$. Then $\kn(Y)$ is cut out by the polynomial
\begin{align*}
2\det\begin{pmatrix}
g_{1,1}^{(2)} & g_{1,2}^{(2)} & g_{1,3}^{(2)}\\
1 & 1 & 1 \\
g_{3,1}^{(2)} & g_{3,2}^{(2)} & g_{3,3}^{(2)}
\end{pmatrix}^2+
\det\begin{pmatrix}
1 & 1 & 1 \\
g_{2,1}^{(2)} & g_{2,2}^{(2)} & g_{2,3}^{(2)}\\
g_{3,1}^{(2)} & g_{3,2}^{(2)} & g_{3,3}^{(2)}
\end{pmatrix}\det
\begin{pmatrix}
g_{1,1}^{(2)} & g_{1,2}^{(2)} & g_{1,3}^{(2)}\\
g_{2,1}^{(2)} & g_{2,2}^{(2)} & g_{2,3}^{(2)}\\
1 & 1 & 1
\end{pmatrix}
\end{align*}
which is the polynomial with 54 terms computed in Example \ref{ex: one conic}. The saturation step can be skipped here. The previous identity can be verified using the script in Figure~\ref{fig: idealNashLocus2}, after running the lines in Figure~\ref{fig: idealNashLocus}.
\end{example}
\begin{figure}[h]
\centering
\begin{tcolorbox}[size=fbox,width=\linewidth,colback=blue!5!white,colframe=blue!75!black]
\begin{Verbatim}[fontsize=\small,commandchars=\\\{\}]
G1 = matrix for i in 1..3 list for j in 1..3 list if i==1 then 1 else G_(2,i,j);
G2 = matrix for i in 1..3 list for j in 1..3 list if i==2 then 1 else G_(2,i,j);
G3 = matrix for i in 1..3 list for j in 1..3 list if i==3 then 1 else G_(2,i,j);
INL == sub(ideal(2*det(G2)^2-det(G1)*det(G3)), ring INL)
\end{Verbatim}
\end{tcolorbox}
\caption{The determinantal check of Example~\ref{ex: one conic continued}.}\label{fig: idealNashLocus2}
\end{figure}

\begin{example}
Let $Y\subseteq \PP^2\times \PP^2$ be the point $([1:1:1],[1:1:1])$. Then $Y$ is cut out by the polynomials $\pi_1^{(1)}-\pi_2^{(1)},\pi_1^{(1)}-\pi_3^{(1)},\pi_1^{(2)}-\pi_2^{(2)},\pi_1^{(2)}-\pi_3^{(2)}$. In this case $\bar{\pi}^{(1)} = (\det(G_1^{(2)}),\det(G_2^{(2)}),\det(G_3^{(2)}))$ and $\bar{\pi}^{(2)} = (\det(G_1^{(1)}),\det(G_2^{(1)}),\det(G_3^{(1)}))$. When substituting $(\pi^{(1)},\pi^{(2)})=(\bar{\pi}^{(1)},\bar{\pi}^{(2)})$ into the four polynomials defining $Y$, we get a non-prime ideal which decomposes into four prime ideals, all of codimension 4:
\begin{itemize}
    \item[$(i)$] One linear ideal cutting out $\kn(Y)$, i.e. games for which $[\bar{\pi}^{(1)}]=[1:1:1]$ and $[\bar{\pi}^{(2)}]=[1:1:1]$.
    \item[$(ii)$] Two ideals of degree 3 which correspond to games for which either $\bar{\pi}^{(1)}=0$ and $\bar{\pi}^{(2)}=[1:1:1]$ or $\bar{\pi}^{(1)}=[1:1:1]$ and $\bar{\pi}^{(2)}=0$ respectively.
    \item[$(iii)$] One ideal of degree 9 which corresponds to games for which $\bar{\pi}^{(1)}=\bar{\pi}^{(2)}=0$.
\end{itemize}
The saturation is necessary here and precisely removes the three unwanted components.
\end{example}

\begin{example}
If $d_1=d_2=d$ and $Y\subseteq \PP^{d-1}\times \PP^{d-1}$ is the diagonal, then the defining equations of $\kn(Y)$ are the $d\times d$-minors of the $2(d-1)\times d$ matrix
\begin{equation}\label{eq: def matrix D}
    D=
    \begin{pmatrix}
        g_{1,1}^{(2)}-g_{1,2}^{(2)} & \cdots & g_{1,1}^{(2)}-g_{1,d}^{(2)} & g_{1,1}^{(1)}-g_{2,1}^{(1)} & \cdots & g_{1,1}^{(1)}-g_{d,1}^{(1)} \\
        \vdots & & \vdots & \vdots & & \vdots \\
        g_{d,1}^{(2)}-g_{d,2}^{(2)} & \cdots & g_{d,1}^{(2)}-g_{d,d}^{(2)} & g_{1,d}^{(1)}-g_{2,d}^{(1)} & \cdots & g_{1,d}^{(1)}-g_{d,d}^{(1)}
    \end{pmatrix}^{\!\mT}\,.
\end{equation}
Indeed a game $G=(G^{(1)},G^{(2)})$ belongs to $\kn(Y)$ if and only if there exists a point $[\pi]\in \PP^{d-1}$ such that both $\rank\begin{pmatrix}G^{(2)}\pi &\bone\end{pmatrix}\le 1$ and $\rank\begin{pmatrix}G^{(1)}\pi &\bone\end{pmatrix}\le 1$. By taking $2\times 2$-minors this is equivalent to
\begin{align*}
    g_{1,1}^{(j)}\pi_1+\cdots+g_{d,1}^{(j)}\pi_d\ =\ g_{1,i}^{(j)}\pi_1+\cdots+g_{d,i}^{(j)}\pi_d
\end{align*}
for all $i\in\{2,\dots,d\}$ and $j\in\{1,2\}$. Equivalently, $D\,\pi=0$ and so $G$ belongs to $\kn(Y)$ if and only if $D$ has a nonzero kernel vector. Going back to the Rock, Paper and Scissors game of Table~\ref{tab: RPS}, after plugging in the payoffs into the matrix $D$, we obtain the $4\times 3$ matrix
\[
    \begin{pmatrix}
        -1 & 1 & -1 & 1\\
        -1 & -2 & -1 & -2\\
        2 & 1 & 2 & 1
    \end{pmatrix}^{\!\mT}
\]
which admits the nonzero kernel $\langle(1,1,1)\rangle$, corresponding to the unique totally mixed Nash equilibrium given in \eqref{eq: tmNE RPS}. This confirms that Rock, Paper and Scissors belongs to $\kn(Y)$ for the diagonal $Y\subseteq \PP^2\times\PP^2$.
\end{example}

We conclude this section with the computation of the ideal of a Nash locus for the binary three-player format.

\begin{example}
Let $d_1=d_2=d_3=2$ and consider the subvariety $Y=\{[0:1]\}\times\PP^1\times\PP^1\subseteq(\PP^1)^3$. In other words, we are looking for three-player binary games with the property that at least one solution $[\bpi]=([\pi_1^{(1)},\pi_2^{(1)}],[\pi_1^{(2)},\pi_2^{(2)}],[\pi_1^{(3)},\pi_2^{(3)}])$ of $\kz_G$ is such that $\pi_1^{(1)}=0$.
The equations defining $\kz_G$ for a three-player binary game $G=(G^{(1)},G^{(2)},G^{(3)})$ are
\begin{align}
    \begin{split}
        a_{11}\pi_1^{(2)}\pi_1^{(3)}+a_{12}\pi_1^{(2)}\pi_2^{(3)}+a_{21}\pi_2^{(2)}\pi_1^{(3)}+a_{22}\pi_2^{(2)}\pi_2^{(3)} &= 0\,,\\
        b_{11}\pi_1^{(1)}\pi_1^{(3)}+b_{12}\pi_1^{(1)}\pi_2^{(3)}+b_{21}\pi_2^{(1)}\pi_1^{(3)}+b_{22}\pi_2^{(1)}\pi_2^{(3)} &= 0\,,\\
        c_{11}\pi_1^{(1)}\pi_1^{(2)}+c_{12}\pi_1^{(1)}\pi_2^{(2)}+c_{21}\pi_2^{(1)}\pi_1^{(2)}+c_{22}\pi_2^{(1)}\pi_2^{(2)} &= 0\,,
    \end{split}
\end{align}
where $a_{ij}=g_{1ij}^{(1)}-g_{2ij}^{(1)}$, $b_{ij}=g_{i1j}^{(2)}-g_{i2j}^{(2)}$ and $c_{ij}=g_{ij1}^{(3)}-g_{ij2}^{(3)}$ for all $i,j$ in $\{1,2\}$. Substituting $[\pi_1^{(1)}:\pi_2^{(1)}]=[0:1]$ in the second and third equation, one gets $[\pi_1^{(2)}:\pi_2^{(2)}]=[c_{22}:-c_{21}]$ and $[\pi_1^{(3)}:\pi_2^{(3)}]=[b_{22}:-b_{21}]$. Substituting these relations into the first equation, one obtains the equation of the Nash locus $\kn(Y)$:
\[
    a_{11}b_{22}c_{22}-a_{12}b_{21}c_{22}-a_{21}b_{22}c_{21}+a_{22}b_{21}c_{21}=0\,.
\]
This corresponds to a hypersurface in $(\C^{2\times 2\times 2})^{\oplus 3}$ of tridegree $(1,1,1)$ in the three sets of variables of the payoff tensors of $G$.

\medskip

In the remainder of the example, we exhibit two games which belong to $\kn(Y)$, and with different types of Nash equilibria. In the supplementary material \cite{sodomaco2026supplementary}, we provide the \texttt{Macaulay2} code for these examples, which uses the \texttt{GameTheory} package \cite{GameTheorySource}. First, consider the three-player binary game $G_1$ with payoff table
\[
    \begin{bmatrix}[cc:cc]
        (0,1,1) & (1,0,0) & (1,-1,1) & (0,-3,0)\\[2pt]
        (0,-1,1) & (0,-1,-1) & (1,1,0) & (2,1,2)
    \end{bmatrix}\,,
\]
where, in our convention, player $1$ chooses a row, player $2$ chooses a column, and player $3$ chooses a side of the table (the two sides are separated by a dashed line). The Nash equilibrium scheme of $\kz_{G_1}$ is the zero locus of the system
\[
(\pi_1^{(3)}-2\,\pi_2^{(3)})\,\pi_2^{(2)} = (\pi_1^{(3)}+2\,\pi_2^{(3)})\,\pi_1^{(1)} = \pi_2^{(1)}\,(\pi_1^{(2)}-3\,\pi_2^{(2)}) = 0\,,
\]
which consists of the two reduced points $([0:1],[3:1],[2:1])$ and $([1:0],[1:0],[2:-1])$. Neither of these two points corresponds to a totally mixed Nash equilibrium, while the first one is a mixed Nash equilibrium of $G_1$: indeed, using the definition \eqref{eq: opt NE}, we have
\begin{align*}
    \Pi_1\left(\pi^{(1)},\left(\frac{3}{4},\frac{1}{4}\right),\left(\frac{2}{3},\frac{1}{3}\right)\right) &= \frac{5}{12}\left(\pi_1^{(1)}+\pi_2^{(1)}\right) = \frac{5}{12}\quad\forall\,\pi^{(1)}\in\Delta_1\\
    \Pi_2\left((0,1),\pi^{(2)},\left(\frac{2}{3},\frac{1}{3}\right)\right) &= -\frac{1}{3}\left(\pi_1^{(2)}+\pi_2^{(2)}\right) = -\frac{1}{3}\quad\forall\,\pi^{(2)}\in\Delta_1\\
    \Pi_3\left((0,1),\left(\frac{3}{4},\frac{1}{4}\right),\pi^{(1)}\right) &= \frac{1}{2}\left(\pi_1^{(3)}+\pi_2^{(3)}\right) = \frac{1}{2}\quad\forall\,\pi^{(3)}\in\Delta_1\,.
\end{align*}
The second example is ``Selten's Horse'', studied in \cite[Section 4.3]{jahani2022automated}, a three-player binary game $G_2$ with payoff table
\[
    \begin{bmatrix}[cc:cc]
        (3,2,2) & (3,2,2) & (0,0,0) & (0,0,0)\\[2pt]
        (4,4,0) & (1,1,1) & (0,0,1) & (1,1,1)
    \end{bmatrix}\,.
\]
It was already discussed in \cite[Example 3.13]{abo2026vector} that $\kz_{G_2}$ is zero-dimensional of degree $2$ and nonreduced; in particular, it consists of the double point $[\bpi]=([0:1],[1:-1],[1:3])\in(\PP^1)^3$. This means that $G_2$ belongs to $\kn(Y)$ but also to the nonsingular locus of the {\em Nash discriminant variety} $\Delta(2,2,2)$; see \cite[Definition 3.2]{abo2026vector}. Consequently, we have that a generic point in the intersection between $\Delta(2,2,2)$ and $\kn(Y)$ is nonsingular in $\Delta(2,2,2)$. Note that, in the {\tt Macaulay2} code used, the function {\tt nashEquilibriumIdeal} does not compute the ideal of $\kz_{[G]}$. In this example, the function automatically includes the relation $\pi_1^{(1)}+\pi_2^{(1)}-1$, which is not satisfied by the point $[1:-1]$.
\end{example}

\section{Nash loci and multigraded associated varieties}\label{sec: multigraded associated varieties}

The goal of this section is to connect Nash loci with incidence loci in products of Grassmannians. This will lead to the study of {\em multigraded associated varieties}, generalizing \cite{osserman2019multigraded} from one-codimensional loci to incidence loci of arbitrary codimension.

\medskip

Fix a game format $\bd=(d_1,\dots,d_n)$ and a subvariety $Y\subseteq \PP^\bd$. By definition, the Nash locus $\kn(Y)$ is the variety of all games $G$ whose Nash equilibrium scheme intersects $Y$. For a fixed game $G$, we observe that the defining equations of the Nash equilibrium scheme are linear in the monomials
\[
    p^{(i)}_{j_1\cdots\,\hat{j_i}\,\cdots j_n}\ \coloneqq\ \pi_{j_1}^{(1)}\cdots \pi_{j_{i-1}}^{(i-1)}\pi_{j_{i+1}}^{(i+1)}\cdots \pi_{j_n}^{(n)}
\]
where $i\in [n]$ and $j_k\in [d_k]$ for $k\neq i$ vary. Furthermore, all monomials $p^{(i)}_{j_1\cdots\,\hat{j_i}\,\cdots j_n}$ appearing in a single equation share the same index $i\in [n]$. It is therefore natural to consider the mixed Segre embedding $\sigma_\bd\colon\PP^\bd\hookrightarrow\PP=\PP^{k_1}\times\cdots\times\PP^{k_n}$, where $k_i=-1+\prod_{j\neq i}d_j$, defined by
\begin{equation}\label{eq: segre}
    [\bpi]=([\pi^{(1)}],\ldots,[\pi^{(n)}])\longmapsto\left([\pi^{(1)}\otimes\cdots\otimes\pi^{(i-1)}\otimes\pi^{(i+1)}\otimes\cdots\otimes\pi^{(n)}]\right)_{i=1}^n\,,
\end{equation}
which was already described in \cite[\S 5]{pratt2026multigraded}.
By the above discussion, the game $G$ then determines uniquely a system of linear equations on each $\PP^{k_i}$ whose solution set is a product of linear spaces $L_G=L_1\times \cdots \times L_n$ with the property that $\pi\in \PP^d$ belongs to the Nash equilibrium scheme $\kz_G$ if and only if $\sigma_\bd(\pi)\in L_G$. On the other hand, given a product of linear spaces $L=L_1\times \cdots \times L_n$, assuming $\dim(L_i)\geq k_i-d_i+1$, it is easy to see that there always exists (non-uniquely) a game $G$ such that $L_G=L$. In this sense, the study of $\kn(Y)$ reduces to studying the locus of all products of linear spaces $L=L_1\times \cdots \times L_n$ with $\dim(L_i)\geq k_i-d_i+1$ such that $L$ intersects the mixed Segre embedding of $Y$, i.e., $L\cap \sigma_\bd(Y)\neq \emptyset$. We therefore wish to study {\em multigraded associated varieties}, i.e., varieties of products of lines which intersect a given multiprojective variety. The purpose of this section is to define these varieties formally and to study first properties such as dimension and multidegree.

\medskip

First, we recall the definition of associated variety in a single projective space. Let $X\subseteq\PP^{k_1}=\PP^k$ be a projective variety of codimension $c$, and let $\G(\ell,k)$ be the Grassmannian of $\ell$-planes in $\PP^k$. In the following, we use $\G_\ell$ as a shorthand for $\G(\ell,k)$.

\begin{definition}[{\cite[Definition 4.10]{kohn2018isotropic}}]\label{def: associated variety}
Let $0\le \ell \le k-1$. The {\em $\ell$-th associated variety of $X\subseteq\PP^k$} is
\[
    \mathrm{Ch}_\ell(X)\ \coloneqq\ \left\{L\in \G_\ell\mid L\cap X\neq \emptyset\right\}\,.
\]
\end{definition}
The associated varieties of Definition~\ref{def: associated variety} are called {\em varieties of incidence planes} in \cite[Examples 6.14, 11.18, 16.6]{harris1992algebraic}.
When $\ell=c-1$, the locus $\mathrm{Ch}_{c-1}(X)$ is the classical \emph{Chow hypersurface}, i.e., the locus of $(c-1)$-dimensional planes meeting $X$; see \cite{waerdenvander1937geometrie}.
For smaller $\ell$, the locus $\mathrm{Ch}_\ell(X)$ typically has higher codimension in $\G_\ell$; in particular, it is not cut out by a single equation in Pl\"ucker coordinates.

\medskip

If we define the incidence correspondence $I_\ell(X) \ \coloneqq\ \{(x,L)\in X\times \G_\ell \mid x\in L\}$ and let $\pi_2\colon I_\ell\to \G_\ell$ be the second projection, then $\mathrm{Ch}_\ell(X) = \pi_2(I_\ell(X))$. Consider the tautological exact sequence of vector bundles on $\G_\ell$
\begin{equation}\label{eq: tautological sequence}
    0\to S \longrightarrow \ko_{\G_\ell}^{\oplus (k+1)} \longrightarrow Q \to 0\,,
\end{equation}
where $\rank(S)=\ell+1$ and $\rank(Q)=k-\ell$.
The Chow ring $A^*(\G_\ell)$ is a free abelian group generated by the Schubert cycles \cite[Corollary 4.7]{EH}. It follows moreover from Giambelli's formula \cite[Proposition 4.16]{EH} that it is generated multiplicatively by just the {\em special Schubert cycles} $\sigma_j$, which are the Chern classes $c_j(Q)$ of the universal quotientbundle $Q$.

The incidence variety $I_\ell$ is canonically isomorphic to the projective bundle $\PP(S)$ with projection $\pi\colon \PP(S)\to\G_\ell$, where $\PP(S)$ parametrizes pairs $(W,[v])$ with $[v]\in \PP(W)\subseteq \PP^k$. Let $\zeta\coloneqq c_1(\ko_{\PP(S)}(1))$ and let $H\coloneqq c_1(\ko_{\PP^k}(1))$ be the hyperplane class on $\PP^k$.
In the following proposition, we relate the class $[\mathrm{Ch}_\ell(X)]$ in the Chow ring of $\G_\ell$ to the degree of $X$ and the special Schubert classes.

\begin{proposition}\label{prop:class-single}
Assume $X\subseteq \PP^k$ is equidimensional of codimension $c$. Then
\[
    [\mathrm{Ch}_\ell(X)] = \deg(X)\,\sigma_{c-\ell}\in A^*(\G_\ell)\quad\text{for all $\ell\le c$}\,.
\]
\end{proposition}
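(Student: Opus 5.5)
Note first a sanity check on the range of $\ell$. The Chow hypersurface case $\ell=c-1$ gives $\sigma_1$, which is consistent with the formula. For $\ell=c$ the formula would give $\sigma_0=1$, i.e. every $c$-plane meets $X$; this is true (then $\mathrm{Ch}_c(X)=\G_\ell$ and the class is the fundamental class), but then $\deg X\cdot\sigma_0$ would be wrong unless $\deg X=1$. So the meaningful range is $\ell\le c-1$, and I will prove the formula there, reading ``$\ell\le c$'' as ``$\ell<c$'' (when $\ell=c$ the map to $\G_\ell$ is generically finite of degree $\deg X$, which is how the formula should be interpreted).

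\emph{Step 1: compute the class of the incidence correspondence.} Use the identification $I_\ell(X)=\PP(S)\times_{\PP^k}X$ inside $\PP(S)$, and the natural map $\rho\colon\PP(S)\to\PP^k$, under which $\rho^*H=\zeta$. Since $\rho$ is a smooth fibration (its fibre over a point $p$ is the Grassmannian of $\ell$-planes through $p$), we get
\[
[I_\ell(X)]=\rho^*[X]=\deg(X)\,\zeta^c\in A^*(\PP(S)).
\]
This is justified by flat pullback of cycles, together with $[X]=\deg(X)H^c$ in $A^*(\PP^k)$. Equidimensionality lets us sum over the components of $X$.

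\emph{Step 2: push forward to $\G_\ell$.} Recall that $\pi_*\zeta^{\ell+j}=s_j(S)$, the Segre class, with the sign convention chosen so that $\ko_{\PP(S)}(1)$ restricts to the hyperplane class on the fibres. From the tautological sequence~\eqref{eq: tautological sequence} we have $c(S)c(Q)=1$, so $s(S)=c(Q)$ and hence $s_j(S)=\sigma_j$. With $j=c-\ell\ge 1$ this gives
\[
\pi_*[I_\ell(X)]=\deg(X)\,\sigma_{c-\ell}.
\]
As a dimension check, $I_\ell(X)$ has dimension $(k-c)+\ell(k-\ell)$, which is less than $\dim\G_\ell$ when $\ell<c$. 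In that case the image has codimension exactly $c-\ell$, matching the degree of $\sigma_{c-\ell}$.

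\emph{Step 3: show $\pi$ is birational onto its image.} This is the step I expect to be the main obstacle. I would show that a general $\ell$-plane meeting $X$ meets it in exactly one point, transversally. Fix a general point $x$ of a component of $X$. The $\ell$-planes through $x$ that meet $X$ again form a locus of dimension at most $\ell(k-\ell)-\ell+(k-c)-\ell(k-\ell)+\ldots$; the cleaner way to see this is as the image of the pairs $(x',L)$ with $x'\neq x$, whose dimension is $(k-c)+(\ell-1)(k-\ell)-\ldots$. The standard version of the argument goes through the line $\overline{xx'}$: choose $x'\in X$, then an $\ell$-plane containing the line $\overline{xx'}$. This gives dimension at most $(k-c)+(\ell-1)(k-\ell-1)$, which is strictly less than the fibre dimension $\ell(k-\ell)$ of $\pi^{-1}$ over $x$ because $c>\ell$. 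Hence the general fibre of $\pi$ over $\mathrm{Ch}_\ell(X)$ is a single reduced point, so $\pi_*[I_\ell(X)]=[\mathrm{Ch}_\ell(X)]$. For reducedness, I would use that a general plane through a smooth point of $X$ meets it transversally there, which also holds when $c>\ell$.
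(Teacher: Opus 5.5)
Your proof follows the paper's route: pull $[X]=\deg(X)H^c$ back along the flat map $\PP(S)\to\PP^k$ to get $\deg(X)\,\zeta^c$, push forward using $\pi_*\zeta^{\ell+j}=s_j(S)$, and use $c(S)c(Q)=1$ to get $\deg(X)\,\sigma_{c-\ell}$. Your Step 3 supplies the birationality of $\pi$ onto $\mathrm{Ch}_\ell(X)$, which the paper's identity $[\mathrm{Ch}_\ell(X)]=\pi_*p^*[X]$ uses without comment, and you are right that this fails at $\ell=c$ (there $\pi$ has degree $\deg X$), so the statement should read $\ell<c$; one fix is needed: the family of $\ell$-planes containing a fixed line has dimension $(\ell-1)(k-\ell)$, not $(\ell-1)(k-\ell-1)$, and with that count the inequality $(k-c)+(\ell-1)(k-\ell)<\ell(k-\ell)$ is exactly $\ell<c$.
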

\begin{proof}
Consider $\PP(S)\cong I_\ell \subseteq \PP^k\times\G_\ell$ with projections $p\colon\PP(S)\to \PP^k$ and $\pi\colon\PP(S)\to\G_\ell$.
One has $p^*H=\zeta$ (see \cite[Section 9.3.1]{EH}).
Pulling back $[X]=\deg(X)\,H^c\in A^c(\PP^k)$ gives $p^*[X] = \deg(X)\,\zeta^c\in A^c(\PP(S))$. Pushing forward yields
\[
    [\mathrm{Ch}_\ell(X)] \ =\  \pi_*\, p^*[X] \ =\  \deg(X)\,\pi_*\bigl(\zeta^c\bigr)\,.
\]
Since $\pi\colon\PP(S)\to\G_\ell$ is a projective bundle of relative dimension $\ell$, the Segre class pushforward formula (see \cite[\S3.3 and \S14]{fulton1998intersection}) yields $\pi_*\bigl(\zeta^{\,\ell+i}\bigr)= s_i(S)$ for all $i\ge 0$, where $s_i(S)$ is the $i$-th Segre class of $S$.
In particular $\pi_*\bigl(\zeta^c\bigr)= s_{c-\ell}(S)$.
Finally, using $s(S)=c(S)^{-1}$ and the Whitney sum formula $c(S)\,c(Q)=1$ coming from the tautological sequence \eqref{eq: tautological sequence}, we get $s_{c-\ell}(S)=c_{c-\ell}(Q)$. Thus $[\mathrm{Ch}_\ell(X)]=\deg(X)\,\sigma_{c-\ell}$.
\end{proof}

Let $\iota\colon\G_l\hookrightarrow \PP(\bigwedge^{\ell+1}\C^{k+1})$ be the Pl\"ucker embedding.
Let $H_{\G_l}\in A^*(\G_l)$ be the Pl\"ucker hyperplane class; equivalently, $H_{\G_l} = c_1\bigl(\det(Q)\bigr)=\sigma_1$. The Pl\"ucker degree of a subvariety $Y\subseteq\G_\ell$ of dimension $m$ is
\[
    \deg_{\mathrm{Pl}}(Y) \ \coloneqq\  \int_{\G_\ell} [Y]\cdot H_{\G_\ell}^{\,m}\,.
\]
Set $N_\ell\coloneqq\dim\G_\ell=c\,(\ell+1)$.
Then, using Proposition~\ref{prop:class-single}, the Pl\"ucker degree of $\mathrm{Ch}_\ell(X)$ is
\[
    \deg_{\mathrm{Pl}}(\mathrm{Ch}_\ell(X)) \ =\  \deg(X)\cdot \int_{\G_\ell} \sigma_{c-\ell}\, H_{\G_\ell}^{\,N_\ell-c+\ell}\,.
\]

\bigskip

We now move to the multigraded setting. Consider the multiprojective space $\PP \coloneqq \PP^{k_1}\times\cdots\times \PP^{k_n}$ and let $X\subseteq\PP$ be an irreducible subvariety of codimension $c\coloneqq\codim_{\PP}(X)$.
For any $\alpha=(\alpha_1,\dots,\alpha_n)$ with $0\le \alpha_i\le k_i$, we set $|\alpha|\coloneqq\sum_{i=1}^n\alpha_i$. We also define $\G_i\coloneqq\G(\alpha_i,k_i)$ for all $i\in[n]$ and $\G_\alpha\coloneqq\G_1\times\cdots\times\G_n$.
In particular, an element of $\G_\alpha$ corresponds to a product of linear spaces $L=L_1\times\cdots\times L_n\subseteq \PP$, where $\dim L_i=\alpha_i$.

\begin{definition}\label{def: multigraded associated variety}
The {\em multigraded associated variety of $X\subseteq\PP$ of index $\alpha$} is
\[
    \mathrm{Ch}_\alpha(X)\ \coloneqq\ \left\{L\in \G_\alpha \mid L\cap X\neq\emptyset\right\}\,.
\]
\end{definition}

When $|\alpha|=c-1$, under suitable nondegeneracy hypotheses, the locus $\mathrm{Ch}_\alpha(X)$ is expected to be a hypersurface in $\G_\alpha$; its defining multihomogeneous equation is the {\em multigraded Cayley-Chow form} of $X$ defined by Osserman and Trager in \cite{osserman2019multigraded}. For general $0\le |\alpha|\le c-1$, typically $\mathrm{Ch}_\alpha(X)$ has codimension larger than one in $\G_\alpha$. In the remainder of this section, our goal is to establish the conditions under which $\mathrm{Ch}_\alpha(X)$ attains the expected codimension and, in this regime, compute its multidegree.

\medskip

With the above definition, we can formalize the discussion at the beginning of this section as follows.

\begin{proposition}\label{prop: multigraded vs Nash locus}
Fix a game format $\bd=(d_1,\ldots,d_n)$.
For all $i\in[n]$, let $k_i = -1+\prod_{j\neq i}d_j$. Let $\sigma_\bd$ be the mixed Segre embedding defined in \eqref{eq: segre}. Given a subvariety $Y\subseteq \PP^\bd$, the Nash locus $\kn(Y)$ decomposes into nonempty strata $\kn(Y)_\alpha$ indexed by sequences $\alpha=(\alpha_1,\dots,\alpha_n)$ with $\max(0,k_i-d_i+1)\leq \alpha_i\leq k_i$, such that for each $\alpha$, the map
\[
    \kn(Y)_\alpha\to \G_\alpha\,,\qquad G\mapsto L_G
\]
explained in the beginning of this section
is well-defined. The image of this map is $\mathrm{Ch}_\alpha(X)$. The stratum $\kn(Y)_{\alpha^*}$ for ${\alpha^*}$ with $\alpha_i^*=\max(0,k_i-d_i+1)$ is dense in $\kn(Y)$. If furthermore $k_i-d_i+1\geq 0$ for all $i\in [n]$, then each fiber of the map $\kn(Y)_{\alpha^*}\to \mathrm{Ch}_{\alpha^*}(\sigma_\bd(Y))$ is isomorphic to the product of general linear groups $\prod_{i=1}^n\mathrm{GL}({\alpha_i^*})$ as algebraic varieties.
\end{proposition}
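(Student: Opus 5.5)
The plan is to make the map $G\mapsto L_G$ explicit through the linear algebra of the equations $\sigma_k^{(i)}$, stratify by the ranks of the resulting coefficient matrices, and then read off each claim in turn.

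\textbf{Linear-algebraic description of $L_G$.} For each $i\in[n]$, the $d_i-1$ polynomials $\sigma_1^{(i)},\ldots,\sigma_{d_i-1}^{(i)}$ are linear forms in the monomials $p^{(i)}$, i.e. linear forms on $\C^{k_i+1}=\bigotimes_{j\neq i}V_j$. Concretely, $\sigma_k^{(i)}$ is the difference of the $1$st and $(k+1)$st slices of $G^{(i)}$ along the $i$-th index, contracted with $\otimes_{j\neq i}\pi^{(j)}$. Let $M_i(G)$ be the $(d_i-1)\times(k_i+1)$ matrix of these coefficients. Set
\[
    r_i(G)=\rank M_i(G)\,,\qquad L_i=\PP(\ker M_i(G))\,,
\]
so that $\dim L_i=k_i-r_i(G)$ and $\alpha_i=k_i-r_i(G)$. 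Since $0\le r_i\le\min(d_i-1,k_i+1)$, we get $\max(0,k_i-d_i+1)\le\alpha_i\le k_i$. (The edge case $r_i=k_i+1$ gives $L_i=\emptyset$, and such games have no equilibria, so they do not lie in $\kn(Y)$.) Define $\kn(Y)_\alpha$ as the set of $[G]\in\kn(Y)$ with $k_i-r_i(G)=\alpha_i$ for all $i$. The rank conditions are locally closed, so these strata partition $\kn(Y)$, and on each stratum $G\mapsto(\ker M_i(G))_i$ is a morphism to $\G_\alpha$. It is a morphism because the kernel of a constant-rank matrix family varies algebraically. The scaling $G^{(i)}\mapsto\lambda_iG^{(i)}$ does not change kernels, so the map is well defined on $\PP(V)^n$. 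Nonempty strata are those actually realized; I would discard empty ones by convention.

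\textbf{Image and density.} Since $[\bpi]\in\kz_G$ if and only if $\sigma_\bd(\bpi)\in L_G$, we have $\kz_G\cap Y\ne\emptyset$ if and only if $L_G\cap\sigma_\bd(Y)\ne\emptyset$. Hence the image is contained in $\mathrm{Ch}_\alpha(\sigma_\bd(Y))$. For surjectivity, fix any $L=\prod L_i$ with $\dim L_i=\alpha_i$. Choose, for each $i$, a $(d_i-1)\times(k_i+1)$ matrix of rank $k_i-\alpha_i$ with kernel $L_i$. The map $G^{(i)}\mapsto M_i(G)$ is a surjective linear map $V\to\C^{(d_i-1)\times(k_i+1)}$: the first slice is free (set it to $0$) and the other slices give arbitrary rows. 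So some $G$ realizes this matrix, and then $L_G=L$.

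For density, note that $\kn(Y)$ is irreducible by Theorem~\ref{thm: codim degree Nash locus}, under its hypotheses; otherwise I would apply the argument to each component. The condition that $\rank M_i$ be maximal is open. It is nonempty on $\kn(Y)$ because the birationality witness $G$ constructed in that proof can be perturbed inside $\kn(Y)$ to reach generic ranks. A cleaner route: the incidence variety $N$ is a vector bundle over $Y$ whose fibers are linear spaces of games. In such a fiber, generic ranks are maximal, since imposing $\sigma(\bpi)=0$ only fixes a single vector in the kernel. So a generic point of $N$, and hence of $\kn(Y)$, lies in $\kn(Y)_{\alpha^*}$.

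\textbf{Fibers.} This is the step I expect to be the main obstacle, because the precise group depends on what one quotients by. If $k_i-d_i+1\ge0$, then on $\kn(Y)_{\alpha^*}$ each $M_i$ has full row rank $d_i-1$. The games with prescribed $L_G$ are those whose row spaces $\mathrm{row}(M_i)=L_i^\perp$ are prescribed. Full-rank matrices with a fixed row space form a $\mathrm{GL}(d_i-1)$-torsor, acting by left multiplication. In addition, there is the affine freedom in the first slice of $G^{(i)}$ and the projective scaling. I would show that the fiber is the product of these torsors modulo that extra affine and scaling data, which are trivial fibrations. This identifies the fiber with $\prod_i\mathrm{GL}(d_i-1)$, which is $\mathrm{GL}$ of the codimension $k_i-\alpha_i^*$. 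I would interpret the statement's $\mathrm{GL}(\alpha_i^*)$ in this sense, checking the indexing convention carefully and reconciling the dimension count of the fiber with $\dim\kn(Y)-\dim\mathrm{Ch}_{\alpha^*}$.
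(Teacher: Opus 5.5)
Your stratification by the ranks of the coefficient matrices, the well-definedness of $G\mapsto L_G$, the surjectivity onto $\mathrm{Ch}_\alpha(\sigma_\bd(Y))$ and the density argument are correct. They are more detailed than the paper, whose proof only observes that a given $L$ is realized by choosing matrices $A_i$ with $\ker A_i=L_i$ and reading their entries as differences of slices of $G^{(i)}$. Your second density route is the one to keep. Over $[\bpi]\in Y$, the fiber of $N$ lets $A_i$ range over all solutions of $A_iv_i=0$ with $v_i=\otimes_{j\neq i}\pi^{(j)}\neq0$, and their generic rank is $\min(d_i-1,k_i)$. This needs none of the hypotheses of Theorem~\ref{thm: codim degree Nash locus}. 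Also, no convention about empty strata is needed. Apply your surjectivity argument to any $L$ of the right dimensions passing through a point of $\sigma_\bd(Y)$: this shows every stratum in the stated range is nonempty.

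The gap is the fiber step, and it cannot be repaired, because the last claim is false as stated. You propose to discard the first-slice freedom and the scaling as ``trivial fibrations''. But the fiber of $\kn(Y)_{\alpha^*}\to\mathrm{Ch}_{\alpha^*}(\sigma_\bd(Y))$ is a subset of $\PP(V)^n$, and nothing is quotiented. Adding to $G^{(i)}$ a tensor that is constant in the $i$-th index leaves $A_i$, hence $L_G$, unchanged, yet gives a different point of the same fiber. Explicitly, let $\widehat L_i\subseteq\C^{k_i+1}$ be the cone over $L_i$. Let $B_i$ be a $(d_i-1)\times(k_i+1)$ matrix whose rows span the annihilator of $\widehat L_i$. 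Let $U_i\subseteq V$ be the linear space of all $G^{(i)}$ with $A_i(G^{(i)})=M_iB_i$ for some square matrix $M_i$. Then the fiber over $L$ is $\prod_i\bigl(\PP(U_i)\setminus\{\det M_i=0\}\bigr)$. Each factor is $(\mathrm{GL}(d_i-1)\times\C^{k_i+1})/\C^*$, of dimension $(d_i-1)^2+k_i$.

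This agrees with the count you planned: $\dim\PP(V)^n-\dim\G_{\alpha^*}=\sum_i\bigl((d_i-1)^2+k_i\bigr)$, while $\kn(Y)$ and $\mathrm{Ch}_{\alpha^*}(\sigma_\bd(Y))$ both have codimension $c$ when Theorem~\ref{thm: codim degree Nash locus} and Proposition~\ref{prop: when Chow is of expected codimension} apply. Since $k_i\ge1$, this never matches $\prod_i\mathrm{GL}(d_i-1)$, let alone $\prod_i\mathrm{GL}(\alpha_i^*)$. For $n=2$, $d_1=d_2=d$ one has $\alpha^*=(0,0)$, so the statement predicts one-point fibers, whereas the actual fibers have dimension $2d(d-1)$.

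The paper's justification (``choosing $A_i$ amounts to choosing a basis of $L_i$'') has the same two defects. One is really choosing a basis of the $(d_i-1)$-dimensional annihilator, not of $L_i$. And the $(k_i+1)$-dimensional kernel of $G^{(i)}\mapsto A_i$ is ignored. What your argument does prove is that $A_i\mapsto\ker A_i$ on full-rank matrices has $\mathrm{GL}(d_i-1)$-torsors as fibers. You should state the fiber of the map on $\kn(Y)_{\alpha^*}$ in the corrected form above, rather than reinterpret the index in the statement.
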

\begin{proof}
This follows from the discussion at the beginning of this section, and by noting that, to go from $L=L_1\times \cdots\times L_n$ to a game $G$ with $L_G=L$, one needs to pick matrices $A_i\in \C^{(d_i-1)\times k_i}$ such that $L_i=\ker A_i$. We can then uniquely associate the class $[G]\in \PP(V)^n$ to such a collection of matrices $A_i$ by interpreting the entries of $A_i$ as $g^{(i)}_{j_1\cdots\,j_{i-1}\,1\,j_{i+1}\,\cdots\,j_n}-g^{(i)}_{j_1\,\cdots\,j_{i-1}\,s\,j_{i+1}\,\cdots\,j_n}$ where $j_k\in [d_k]$ specifies a column of $A_i$ and $s\in\{2,\ldots,d_i\}$ specifies a row of $A_i$. If $k_i-d_i+1=\dim(L_i)$, then choosing $A_i$ amounts to choosing a basis of $L_i$, which yields the last statement about fibers being isomorphic to products of linear spaces.
\end{proof}

\begin{example}\label{ex: diagonal n-player game equal number of options}
Suppose that $d_1=\cdots=d_n=d$ and consider the diagonal $\Delta\coloneqq\{([\pi],\ldots,[\pi])\mid[\pi]\in\PP^{d-1}\}\subseteq\PP^\bd$ of codimension $\codim\Delta=(n-1)(d-1)$. Then $\sigma_\bd(\Delta)\subseteq\PP=(\PP^{d^{n-1}-1})^n$ is contained in the diagonal of $\PP$. Applying Proposition~\ref{prop: multigraded vs Nash locus}, for $\alpha=(d^{n-1}-d,\ldots,d^{n-1}-d)$ we find that there is a dense subset $\kn(Y)_\alpha$ of $\kn(Y)$ which surjects to $\mathrm{Ch}_\alpha(\sigma_\bd(\Delta))$ with fibers isomorphic to $(\mathrm{GL}(d^{n-1}-d))^n$.
\end{example}

We now study the dimensions and multidegrees of multigraded associated varieties. Assume $\dim(X)=r$, in particular $r=k_1+\cdots+k_n-c$. Fix also $\beta=(\beta_1,\ldots,\beta_n)$ with $0\le \beta_i\le k_i$ for all $i$ and $|\beta| = r + e$ for some $e\ge 0$, and write $\alpha_i = k_i - \beta_i$ for each $i$.

\begin{proposition}\label{prop: when Chow is of expected codimension}
Under the previous notations, we have that $\mathrm{Ch}_\alpha(X)$ has the expected codimension $e$ if and only if for every nonempty $I \subseteq [n]$ we have
\begin{equation}\label{eq:genericity_conditions}
    \dim p_I(X) \ge |\beta_I|-e
\end{equation}
where $p_I(X)$ denotes the projection of $X$ onto $\prod_{i\in I}\mathbb{P}^{k_i}$.
\end{proposition}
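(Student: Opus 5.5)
The plan is to use the incidence correspondence, as in the proofs of Theorem~\ref{thm: codim degree Nash locus} and Proposition~\ref{prop:class-single}. Set
\[
I_\alpha(X)\coloneqq\{(x,L)\in X\times\G_\alpha\mid x\in L\}\,,
\]
and let $\pi_1,\pi_2$ be the two projections, so that $\mathrm{Ch}_\alpha(X)=\pi_2(I_\alpha(X))$.

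Over a point $x=(x_1,\ldots,x_n)\in X$, the fiber $\pi_1^{-1}(x)$ is the product of the Schubert varieties $\{L_i\in\G_i\mid x_i\in L_i\}$. The $i$-th factor is a Grassmannian of codimension $k_i-\alpha_i=\beta_i$ in $\G_i$. Hence $I_\alpha(X)$ is irreducible, being a Grassmannian bundle over the irreducible variety $X$, and
\[
\dim I_\alpha(X)=\dim\G_\alpha-|\beta|+r=\dim\G_\alpha-e\,.
\]
Therefore $\codim\mathrm{Ch}_\alpha(X)\ge e$ always holds. Equality holds if and only if a general fiber of $\pi_2$ over its image is finite, that is, a general $L\in\mathrm{Ch}_\alpha(X)$ meets $X$ in finitely many points. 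The proposition thus reduces to showing that this finiteness is equivalent to \eqref{eq:genericity_conditions}.

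\emph{Necessity.} Suppose that for some nonempty $I\subseteq[n]$ we have $\dim p_I(X)<|\beta_I|-e$. Every $L\in\mathrm{Ch}_\alpha(X)$ satisfies $L_I\cap p_I(X)\neq\emptyset$, where $L_I=\prod_{i\in I}L_i$. So $\mathrm{Ch}_\alpha(X)\subseteq\mathrm{Ch}_{\alpha_I}(p_I(X))\times\prod_{i\notin I}\G_i$. Applying the same incidence count to the irreducible variety $p_I(X)\subseteq\prod_{i\in I}\PP^{k_i}$ gives
\[
\codim\mathrm{Ch}_{\alpha_I}(p_I(X))\ \ge\ |\beta_I|-\dim p_I(X)\ >\ e\,.
\]
Hence $\mathrm{Ch}_\alpha(X)$ has codimension strictly larger than $e$.

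\emph{Sufficiency.} Assume \eqref{eq:genericity_conditions} holds and, for contradiction, that a general $L\in\mathrm{Ch}_\alpha(X)$ meets $X$ in a set of positive dimension.
\begin{enumerate}
\item Pick a positive-dimensional component $Z_L\subseteq L\cap X$ and let $I\subseteq[n]$ be the set of factors on which $Z_L$ does not project to a point. Since $Z_L$ has positive dimension, $I\neq\emptyset$. The set $I$ is constant for general $L$.
\item Compare dimensions in the incidence $\{(x,L): x\in Z_L\}$. Fix $L$ and the points $p_{[n]\setminus I}(Z_L)$. The factors $L_i$ with $i\in I$ must contain the positive-dimensional variety $p_I(Z_L)\subseteq p_I(X)\cap L_I$.
\item Project the incidence correspondence onto $p_I(X)\times\prod_{i\in I}\G_i$. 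The positive-dimensional family of points in each fiber forces the family of admissible $L_I$ to have dimension at most $\dim\prod_{i\in I}\G_i-|\beta_I|+\dim p_I(X)-1$.
\item Combine this with the unconstrained factors $i\notin I$, each of which contributes codimension $\beta_i$. This bounds $\dim\mathrm{Ch}_\alpha(X)$, and the bound should contradict the inequality $\dim p_I(X)\ge|\beta_I|-e$.
\end{enumerate}

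The main obstacle is the bookkeeping in the sufficiency direction, namely choosing $I$ correctly. The bound in step 3 appears to need the whole fiber structure of $p_I$ restricted to $X$, not just $\dim p_I(X)$. I would therefore choose $I$ minimal among the index sets for which the fibers of $X\cap L$ behave badly, and take $Z_L$ together with its projection $p_I$ so that fibers over $p_I(Z_L)$ are finite, as in a Hall-type argument.

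If this becomes too delicate, a fallback is to reduce to the case $e=1$. Take $e-1$ general hyperplane sections of $X$ in suitably chosen factors to obtain $X'$, and adjust $\beta$ accordingly so that \eqref{eq:genericity_conditions} is preserved. One then invokes the hypersurface criterion of Osserman and Trager \cite{osserman2019multigraded} for the multigraded Cayley-Chow form, and lifts the conclusion back to $X$ through the incidence count above.
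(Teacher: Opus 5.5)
Your necessity argument is correct and differs from the paper's. You use the inclusion $\mathrm{Ch}_\alpha(X)\subseteq\mathrm{Ch}_{\alpha_I}(p_I(X))\times\prod_{i\notin I}\G_i$ together with the incidence count for $p_I(X)$. The paper instead shows that every $L$ meeting $X$ meets it in positive dimension: fibers of $p_I$ have dimension $\ge r-\dim p_I(X)>|\beta_{I^c}|$, and $p_{I^c}^{-1}L_{I^c}$ cuts only $|\beta_{I^c}|$ dimensions. Yours is, if anything, shorter.

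The sufficiency direction has a genuine gap, and it is more than bookkeeping. Under your contradiction hypothesis you already know $\codim\mathrm{Ch}_\alpha(X)\ge e+1$. Every estimate in Steps 3--4 is again an upper bound on $\dim\mathrm{Ch}_\alpha(X)$, and the hypothesis $\dim p_I(X)\ge|\beta_I|-e$ only makes such upper bounds weaker, so no contradiction can arise. What has to be proved is a \emph{lower} bound on $\dim\mathrm{Ch}_\alpha(X)$, i.e.\ the existence of a single $L$ with $X\cap L$ finite and nonempty. Step 4 is also not right as stated. For $i\notin I$ the space $L_i$ must pass through $w_i$, where $w=p_{I^c}(Z_L)$ moves with $L$, so these factors do not simply contribute codimension $\beta_i$. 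Moreover $Z_L\subseteq X\cap p_{I^c}^{-1}(w)$, so $\dim Z_L$ is controlled by the fiber of $p_{I^c}$ over a possibly special point, and not just by $\dim p_I(X)$. Note that the hypothesis for a set $J$ says exactly that the \emph{general} fiber of $p_J|_X$ has dimension at most $|\beta_{J^c}|$.

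The missing idea is to fix the point before the linear space. Choose $x\in X$ with $\dim\bigl(X\cap p_J^{-1}(p_J(x))\bigr)\le|\beta_{J^c}|$ for all $J\subsetneq[n]$ simultaneously; this is possible since the general fiber of each $p_J$ has this dimension. Then show that a general $L\ni x$ meets $X$ in finitely many points. To do so, bound $\{(y,L)\in X\times\G_\alpha : x,y\in L\}$ stratum by stratum according to $J=\{i : y_i=x_i\}$. On the $J$-stratum, $y$ ranges over the fiber above. For $i\notin J$, the space $L_i$ must contain the line $\overline{x_iy_i}$, which costs exactly $\beta_i$ dimensions relative to $L_i\ni x_i$. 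Summing, every stratum has dimension at most $\dim\G_\alpha'=\dim\{L : x\in L\}$, which gives the required $L$.

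Your fallback does not close the gap as written. Cutting $X$ by a hyperplane $H$ in factor $i$ amounts to replacing $L_i$ by $L_i\cap H$, which raises $e$ instead of lowering it. A reduction to Osserman--Trager must instead enlarge $L$. One would find $\beta'\le\beta$ with $|\beta'|=r+1$ and $\dim p_I(X)\ge|\beta'_I|-1$ for all $I$, via a Hall-type argument using submodularity of $I\mapsto\dim p_I(X)$. Then take $L'$ of codimensions $\beta'$ with $X\cap L'$ finite and nonempty, and choose $L\subseteq L'$ through one of its points. Neither route is carried out in your proposal.
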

\begin{proof}
The proof follows the argument used in \cite[Proposition 3.1]{osserman2019multigraded}. Consider the incidence correspondence
\begin{equation}\label{eq: incidence}
    I_\alpha(X) \coloneqq \{(x,L)\in X\times \G_\alpha \mid \text{$x_i\in L_i$ for all $i\in[n]$}\}\,.
\end{equation}
By definition $\mathrm{Ch}_\alpha(X)$ is the image of $I_\alpha(X)$ under projection to $\G_\alpha$.
The other projection of $I_\alpha(X)$ to $X$ makes $I_\alpha(X)$ a fiber bundle over $X$ where each fiber is isomorphic to \[
\G_\alpha'\coloneq \G(\alpha_1-1,k_1-1)\times \cdots \times \G(\alpha_n-1,k_n-1).
\]
In particular we see that $I_\alpha(X)$ is irreducible of dimension  $e_1 = r + \sum_{i=1}^n \alpha_i(k_i-\alpha_i)$.
This also implies that $\mathrm{Ch}_\alpha(X)$ is irreducible whenever $X$ is.
The dimension of the product of Grassmannians $\G_\alpha$ is $e_2 =\sum_{i=1}^n (\alpha_i+1)(k_i-\alpha_i)$. Since $e_2-e_1=|\beta|-r=e$, we see that $\codim(\mathrm{Ch}_\alpha(X))=e$ if and only if the generic fiber of the projection of $I_\alpha(X)$ to the product of Grassmannians is zero-dimensional. This happens if and only if there exists $L\in \G_\alpha$ such that $X \cap L$ is finite and nonempty. We will now show that the existence of such an $L$ is equivalent to the claimed conditions \eqref{eq:genericity_conditions}.

\medskip

First suppose that $\dim p_I(X) < |\beta_I|-e$ for some $I \subseteq [n]$.
Let $L\in G_\alpha$ be such that $X \cap L\neq\emptyset$, and write $L_I =\prod_{i \in I} L_i$, and $L_{I^c}=\prod_{i \notin I} L_i$. As $X \cap p_I^{-1} L_I\neq \emptyset$, its dimension is at least the generic fiber dimension of the map $p_I$, which is $r-\dim p_I(X)$. Therefore  
\[
    \dim(X \cap p_I^{-1} L_I)\ \geq\ r-\dim p_I(X)\ >\ r+e-|\beta_I|\ =\ |\beta_{I^c}|\,.
\]
On the other hand $\codim(p_{I^c}^{-1} L_{I^c}) = |\beta_{I^c}|$, so as $X \cap L = (X \cap p_I^{-1} L_I) \cap p_{I^c}^{-1} L_{I^c}$, we conclude that $\dim(X \cap L)>0$. Hence there exists no $L\in \G_\alpha$ such that $X\cap L$ is finite and nonempty.

\medskip

For the reverse implication, suppose that the stated inequalities \eqref{eq:genericity_conditions} are satisfied for all nonempty $I\subseteq [n]$. It follows that for any such $I$, the generic fiber dimension of $p_I$ is $r-\dim p_I(X)\leq r-|\beta|+e=|\beta_{I^c}|$. Hence there exists a point $x=(x_1,\dots,x_n) \in X$ such that for all $I\subsetneq[n]$, the fiber $p_I^{-1}(\{p_I(x)\})\cap X$ has dimension at most $|\beta_{I^c}|=\sum_{i \not \in I} (k_i-\alpha_i)$. We fix such a point $x\in X$ and claim that for $L\in \G_\alpha$ with $x_i \in L_i$ generic, the intersection $X \cap L$ is nonempty and finite. To see this, consider the incidence variety
\[
    Y \coloneqq \{(y,L)\in X\times \G_\alpha\mid y\in L, x\in L\}\,.
\]
Clearly, the image of $Y$ under the second projection is precisely the variety of all products of linear spaces $L$ which contain $x$, since for such $L$ we have $(x,L)\in Y$. This image is itself isomorphic to the product of Grassmannians $\G_\alpha'$ and hence of dimension $\sum_{i=1}^n \alpha_i(k_i-\alpha_i)$.  Our claim precisely states that the generic fiber of this second projection is finite and hence it suffices to show that $\dim(Y)\leq \sum_{i=1}^n \alpha_i(k_i-\alpha_i)$. For this note that $Y$ decomposes into the pieces
\[
    Y_I\coloneqq \{(y,L)\in Y\mid y_i=x_i \Leftrightarrow i\in I\}
\]
for all (possibly empty) $I\subseteq [n]$, and hence it suffices to bound the dimension of $Y_I$. Fix $I\subseteq [n]$ such that $Y_I$ is nonempty, then under the first projection $Y_I$ maps into $p_I^{-1}(\{p_I(x)\}) \cap X \subseteq X$ and every fiber is isomorphic to 
\[
    \prod_{i\in I}\G(\alpha_i-1,k_i-1)\times \prod_{i\notin I}\G(\alpha_i-2,k_i-2)
\]
and hence of dimension $\sum_{i \in I} \alpha_i(k_i-\alpha_i)+\sum_{i \not \in I} (\alpha_i-1)(k_i-\alpha_i)$.
We conclude that
\begin{align*}
    \dim(Y_I)\ &=\ \dim(p_I^{-1}(p_I(P)) \cap X) 
    +\sum_{i \in I} \alpha_i(k_i-\alpha_i)+\sum_{i \not \in I} (\alpha_i-1)(k_i-\alpha_i)\\
    &\leq\ \sum_{i \not \in I}(k_i-\alpha_i)+\sum_{i \in I} \alpha_i(k_i-\alpha_i)+\sum_{i \not \in I} (\alpha_i-1)(k_i-\alpha_i)\\
    &=\ \sum_i \alpha_i(k_i-\alpha_i)
\end{align*}
where the inequality comes precisely from our choice of $x$. This shows $\dim(Y)\leq \sum_i \alpha_i(k_i-\alpha_i)$ and hence finishes the proof.
\end{proof}

\begin{example}
Following up on Example~\ref{ex: diagonal n-player game equal number of options}, we show that the variety $X=\sigma_\bd(\Delta)$ satisfies the assumptions of Proposition~\ref{prop: when Chow is of expected codimension}. Observe that each component of the restriction of $\sigma_\bd$ to the diagonal $\Delta$ is equal to composing the Veronese embedding $\PP^{d-1}\hookrightarrow\PP(\mathrm{Sym}^{n-1}\C^d)$ with the inclusion $\PP(\mathrm{Sym}^{n-1}\C^d)\hookrightarrow\PP((\C^d)^{\otimes(n-1)})\cong\PP^{d^{n-1}-1}$, and the image of this composition has degree $(n-1)^{d-1}$. This means that the multidegree of $X$ is
\[
    [X]\ =\ (n-1)^{d-1}\sum_{|\gamma|=c}H^\gamma\in\frac{\Z[H_1,\ldots,H_n]}{\langle H_1^{d^{n-1}},\ldots,H_n^{d^{n-1}}\rangle}\,,\quad c\ =\ n(d^{n-1}-1)-(d-1)\,.
\]
Since we are choosing $\alpha=(d^{n-1}-d,\ldots,d^{n-1}-d)$, the expected codimension of $\mathrm{Ch}_\alpha(X)$ is $e=\codim_{\PP^\bd}(\Delta)=(n-1)(d-1)$. Furthermore, for every nonempty $I\subseteq[n]$, the projection $p_I(X)$ has dimension $d-1$. Since $\beta=(d-1,\ldots,d-1)$, we have $|\beta_I|-e=(|I|-(n-1))(d-1)\le 0$ for $I\neq [n]$, hence the condition of Proposition \ref{prop: when Chow is of expected codimension} is satisfied.
\end{example}

\begin{example}\label{ex: Nash resultants}
We now show an example of a variety $X\subseteq\PP$ and a multiindex $\alpha$ such that the inequalities in Proposition \ref{prop: when Chow is of expected codimension} are not satisfied. Again, this example connects multigraded associated varieties to Game Theory.
Let $\bd=(2,2,4)$ and consider the mixed Segre embedding $\sigma_{(2,2,4)}$ of $\PP^1\times\PP^1\times\PP^3$ into $\PP=\PP^7\times\PP^7\times\PP^3$ defined by
\[
    \sigma_{(2,2,4)}([\pi^{(1)}],[\pi^{(2)}],[\pi^{(3)}]) \coloneqq ([\pi^{(2)}\otimes\pi^{(3)}],[\pi^{(1)}\otimes\pi^{(3)}],[\pi^{(1)}\otimes\pi^{(2)}])\,.
\]
Consider its image $X=\im\sigma_{(2,2,4)}$. Fix the multiindex $\alpha=(6,6,0)$. For a generic product $L=L_1\times L_2\times L_3$ with $\dim L_i=\alpha_i$, we have $L\cap X=\emptyset$. The multigraded associated variety $\mathrm{Ch}_{(6,6,0)}(X)$ is precisely the locus of $L$'s for which $L\cap X\neq\emptyset$. Pulling back this condition under the mixed Segre embedding $\sigma_{(2,2,4)}$, we obtain precisely the locus of coefficients $a_{ij}$, $b_{ij}$, and $c_{ij}^{(s)}$ with $s\in[3]$ such that the system of five equations
\begin{align*}
    a_{11}\pi_1^{(2)}\pi_1^{(3)}+\cdots+a_{24}\pi_2^{(2)}\pi_4^{(3)} &= 0\,,\\
    b_{11}\pi_1^{(1)}\pi_1^{(3)}+\cdots+b_{24}\pi_2^{(1)}\pi_4^{(3)} &= 0\,,\\
    c_{11}^{(s)}\pi_1^{(1)}\pi_1^{(2)}+\cdots+c_{22}^{(s)}\pi_2^{(1)}\pi_2^{(2)} &= 0\,,\quad\forall\,i\in[3]
\end{align*}
admits at least one solution in $\PP^1\times\PP^1\times\PP^3$, namely the Nash equilibrium scheme $\kz_{[G]}$ is nonempty. This locus is called {\em Nash resultant variety} in \cite{abo2026vector}. Recall that the previous polynomial system is induced by a game $G$ of unbalanced format $(2,2,4)$, and its zero locus is the Nash equilibrium scheme $\kz_G$, which is empty for generic payoff tensors $G^{(1)},G^{(2)},G^{(3)}$.
In \cite[Example 3.33]{abo2026vector}, it is shown that the Nash resultant variety is a hypersurface of multidegree $(0,0,6)$, namely its equation, called {\em Nash resultant}, depends only on the entries of the third payoff tensor $G^{(3)}$. Equivalently, the equation of $\mathrm{Ch}_{(6,6,0)}(X)$ depends only on the coordinates of $L_3$.
However, since $r=5$ and $\alpha=(6,6,0)$, we get $\beta=(1,1,3)$ and the expected codimension of $\mathrm{Ch}_{(6,6,0)}(X)$ is $e=|\beta|-r=0$. The expected codimension $e=0$ does not match the actual codimension of $\mathrm{Ch}_{(6,6,0)}(X)$ because, choosing $I=\{3\}$, then $\dim p_I(X)=\dim(\PP^1\times\PP^1)=2<3=\beta_3$, hence Proposition \ref{prop: when Chow is of expected codimension} cannot be applied to this example. Alternatively the proof of Proposition \ref{prop: when Chow is of expected codimension} reveals that the codimension of $\mathrm{Ch}_{(6,6,0)}(X)$ is unexpected whenever there exists no linear space $L\in \G_\alpha$ which has finite nonempty intersection with $X$. In our case any candidate $L=L_1\times L_2\times \{\pi^{(1)}\otimes \pi^{(2)}\}$ will intersect $X$ in a line.

More in general, for any game format $\bd$ such that $d_1\le\cdots\le d_n$ and $d_n-1>\sum_{i=1}^{n-1}(d_i-1)$ when $\alpha_i=k_i-d_i+1$ (assuming this is nonnegative), then $\beta_i=k_i-\alpha_i=d_i-1$ for all $i\in[n]$, therefore $|\beta|=r$ and the expected dimension of $\mathrm{Ch}_\alpha(X)$ is zero. Despite this, Proposition \ref{prop: when Chow is of expected codimension} is not applicable because $\dim p_{\{n\}}(X)=\sum_{i=1}^{n-1}(d_i-1)<d_n-1=\beta_n$. The correct codimension of $\mathrm{Ch}_\alpha(X)$ is $d_n-1-\sum_{i=1}^{n-1}(d_i-1)$ by \cite[Theorem 3.31]{abo2026vector}.
\end{example}

We now turn our attention to the multidegrees of multigraded associated varieties. On each Grassmannian $\G_i$ we have the tautological sequence
\begin{equation}\label{eq: ith tautological sequence}
    0\to S_i \to \ko_{\G_i}^{\oplus (k_i+1)} \to Q_i \to 0\,,
\end{equation}
with $\mathrm{rank}(S_i)=\alpha_i+1$ and $\mathrm{rank}(Q_i)=k_i-\alpha_i$.
We denote by the same symbols $S_i$ and $Q_i$ their pullbacks to $\G_\alpha$.
Let $H_i\coloneqq c_1(\ko_{\PP^{k_i}}(1))$ on $\PP^{k_i}$, and also its pullback to $\PP$.
Then
\[
    A^*(\PP)\cong \frac{\Z[H_1,\dots,H_n]}{(H_1^{k_1+1},\dots,H_n^{k_n+1})}\,.
\]
The multidegree of $X$ is
\begin{equation}\label{eq: multidegree X}
    [X]\ =\ \sum_{|\gamma|=c} \delta_\gamma(X)\, H_1^{\gamma_1}\cdots H_n^{\gamma_n}\,.
\end{equation}
Consider the incidence correspondence $I_\alpha(\PP)$ introduced in \eqref{eq: incidence}. Define $\pi\colon I_\alpha(\PP)\to \G_\alpha$ and $p\colon I_\alpha(\PP)\to\PP$ as the natural projections.
There is a canonical identification $I_\alpha(\PP) \cong \PP(S_1)\times_{\G_\alpha} \cdots \times_{\G_\alpha}\PP(S_n)$, and we write $\zeta_i \coloneqq c_1(\ko_{\PP(S_i)}(1))\in A^1(I_\alpha)$ so that $p^*(H_i)=\zeta_i$; see \cite[Section 9.3.1]{EH}. 

\begin{example}
\label{ex: Chow grass lines P3}
The method {\tt productGrassmannChowRing} available in the file {\tt nashLoci.m2} allows to compute the Chow ring $A^*[\G_\alpha]$ given the vectors of affine dimensions $(\alpha_1+1,\ldots,\alpha_n+1)$ and $(k_1+1,\ldots,k_n+1)$ as input. In the following example, we consider the Chow ring of $\G(1,3)\times\G(1,3)$. Recall that, applying \cite[Theorem 5.26]{EH},
\[
    A^*(\G(1,3)) = \frac{\Z[\sigma_1,\sigma_2]}{\langle\sigma_1^3-2\sigma_1\sigma_2,\sigma_1^2\sigma_2-\sigma_2^2\rangle}
\]
where $\sigma_1$ and $\sigma_2$ are the Chern classes of the universal subbundle $S$ in the exact sequence \eqref{eq: tautological sequence}. Using the same short exact sequence, we see $c(Q)=1/c(S)=1-\sigma_1+\sigma_1^2-\sigma_2$. Hence
\begin{equation}\label{eq: chowring rep for 2G(1,3)}
    A^*(\G(1,3)\times \G(1,3)) = \frac{\Z[\sigma_1,\sigma_2,\tau_1,\tau_2]}{\langle\sigma_1^3-2\sigma_1\sigma_2,\sigma_1^2\sigma_2-\sigma_2^2,\tau_1^3-2\tau_1\tau_2,\tau_1^2\tau_2-\tau_2^2\rangle}\,.
\end{equation}
Differently from above, in the script displayed in Figure~\ref{fig: productGrassmannChowRing}, we present $A^*(\G(1,3)\times \G(1,3))$ using the variables $q_{ij}=c_j(Q_i)$. In the previous example we have $q_{11}=-\sigma_1$, $q_{21}=-\tau_1$, $q_{12}=\sigma_1^2-\sigma_2$, and $q_{22}=\tau_1^2-\tau_2$. Furthermore, we can also integrate a top-degree class in $A^*[\G_\alpha]$. For example, consider the class $q_{11}q_{21}$, namely the class of the locus of pairs of lines in $\PP^3\times\PP^3$ meeting a fixed pair of lines. Multiplying it by $q_{11}^3q_{21}^3$, we obtain a top-degree class in $A^*[\G_\alpha]$, and we can apply the method {\tt integrateProductGrassmann}. The output 4 should be read as $2\cdot 2$, where $2$ is the degree of the Schubert variety of lines in $\PP^3$ meeting a given line.
\end{example}

\begin{figure}[h]
\centering
\begin{tcolorbox}[size=fbox,width=\linewidth,colback=blue!5!white,colframe=blue!75!black]
\begin{Verbatim}[fontsize=\scriptsize,commandchars=\\\{\}]
i1 : load "nashLoci.m2"

i2 : K = {2,2}; N = {4,4};

i4 : intRing = productGrassmannChowRing(K,N);

i5 : describe intRing

                                           ZZ[q   ..q   ]
                                               1,1   2,2
o5 = ------------------------------------------------------------------------------------------
         3                 4       2          2       3                 4       2          2
     (- q    + 2q   q   , q    - 3q   q    + q   , - q    + 2q   q   , q    - 3q   q    + q   )
         1,1     1,1 1,2   1,1     1,1 1,2    1,2     2,1     2,1 2,2   2,1     2,1 2,2    2,2

i6 : clX = q_(1,1)*q_(2,1)

o6 = q   q
      1,1 2,1

o6 : intRing

i7 : integrateProductGrassmann(K,N,clX*q_(1,1)^3*q_(2,1)^3)

o7 = 4
\end{Verbatim}
\end{tcolorbox}
\caption{Usage of the {\tt nashLoci.m2} methods {\tt productGrassmannChowRing} and {\tt integrateProductGrassmann}.}\label{fig: productGrassmannChowRing}
\end{figure}

\begin{proposition}\label{prop: class of multigraded associated variety}
Assume that the inequalities in Proposition~\ref{prop: when Chow is of expected codimension} are satisfied. Then
\[
    [\mathrm{Ch}_\alpha(X)] \ =\  \pi_*\,p^*[X]\in A^*[\G_\alpha]\,.
\]
Equivalently, using the multidegree expansion \eqref{eq: multidegree X},
\begin{equation}\label{eq: deg multigraded associated variety}
    [\mathrm{Ch}_\alpha(X)]\ =\ \sum_{\substack{|\gamma|=c\\\gamma_i\ge\alpha_i\ \forall i}} \delta_\gamma(X)\ \prod_{i=1}^n s_{\gamma_i-\alpha_i}(S_i)\ =\ \sum_{\substack{|\gamma|=c\\\gamma_i\ge \alpha_i\ \forall i}}\delta_\gamma(X)\ \prod_{i=1}^n c_{\gamma_i-\alpha_i}(Q_i)\,.
\end{equation}
\end{proposition}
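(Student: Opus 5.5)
The plan follows the single-factor argument of Proposition~\ref{prop:class-single}, now carried out on the fibered product $I_\alpha(\PP)\cong \PP(S_1)\times_{\G_\alpha}\cdots\times_{\G_\alpha}\PP(S_n)$.

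\emph{Step 1: $\pi$ restricts to a birational map.} The incidence variety $I_\alpha(X)\subseteq I_\alpha(\PP)$ is exactly $p^{-1}(X)$. Since $p\colon I_\alpha(\PP)\to\PP$ is a fiber bundle with smooth fiber $\G'_\alpha$, it is flat. Hence $p^*[X]=[p^{-1}(X)]=[I_\alpha(X)]$, with no excess multiplicities. By the proof of Proposition~\ref{prop: when Chow is of expected codimension}, $I_\alpha(X)$ is irreducible and $\pi|_{I_\alpha(X)}\colon I_\alpha(X)\to\mathrm{Ch}_\alpha(X)$ is generically finite exactly under the inequalities \eqref{eq:genericity_conditions}. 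So $\pi_*[I_\alpha(X)]=\deg(\pi|_{I_\alpha(X)})\,[\mathrm{Ch}_\alpha(X)]$, and the first formula reduces to showing that this degree equals $1$. That is, for generic $L\in\mathrm{Ch}_\alpha(X)$ the intersection $L\cap X$ should be a single reduced point.

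\emph{Step 2: proving the degree is $1$ (main obstacle).} Take a general point $x\in X$ and a general $L\ni x$. The dimension count in the proof of Proposition~\ref{prop: when Chow is of expected codimension} gives that $L\cap X$ is finite. To rule out further points, I would bound the locus $\{(y,L): x,y\in L,\ y\neq x\}$ using the strata $Y_I$ from that proof. The aim is a strict inequality $\dim<\sum_i\alpha_i(k_i-\alpha_i)$, so that a general $L$ through $x$ meets $X$ only at $x$. For $I=\emptyset$ this holds because requiring $y_i\neq x_i$ in each factor costs one extra dimension per factor; the other strata need the genericity of $x$ together with a strictness argument. I expect some care to be needed here, and I would first try the standard argument as in \cite[Proposition 3.1]{osserman2019multigraded}. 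For reducedness, I would note that at a general point $x$ of $X$ the tangent space $T_xX$ meets a general $L\ni x$ only in the expected dimension, so that $\pi$ is generically étale (characteristic zero, generic smoothness).

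\emph{Step 3: computing $\pi_*p^*[X]$.} We have $p^*[X]=\sum_\gamma\delta_\gamma(X)\prod_i\zeta_i^{\gamma_i}$. Since $I_\alpha(\PP)$ is a fibered product of projective bundles $\PP(S_i)$ of relative dimension $\alpha_i$, the pushforward factors:
\[
\pi_*\Bigl(\prod_i\zeta_i^{\gamma_i}\Bigr)=\prod_i(\pi_i)_*\bigl(\zeta_i^{\gamma_i}\bigr).
\]
This follows by pushing forward one factor at a time, using the projection formula and base change for the flat maps $\PP(S_i)\to\G_\alpha$. Each factor equals $s_{\gamma_i-\alpha_i}(S_i)$ if $\gamma_i\ge\alpha_i$ and vanishes otherwise, exactly as in Proposition~\ref{prop:class-single}. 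Finally, $c(S_i)c(Q_i)=1$ from \eqref{eq: ith tautological sequence} gives $s(S_i)=c(Q_i)$, which yields both expressions in \eqref{eq: deg multigraded associated variety}.
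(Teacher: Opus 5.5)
The gap is Step 2, and it cannot be closed: under the inequalities \eqref{eq:genericity_conditions} alone, $\pi|_{I_\alpha(X)}$ need not have degree $1$.

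For general $x$ and proper nonempty $I$, those inequalities only give $\dim(p_I^{-1}(p_I(x))\cap X)=r-\dim p_I(X)\le|\beta_{I^c}|$, and equality is allowed. So your stratum $I$ has dimension $\le\sum_i\alpha_i(k_i-\alpha_i)$, but not $<$. For $I=\emptyset$, containing a second point $y_i\neq x_i$ costs $\beta_i$ dimensions in factor $i$, not one. That stratum is therefore small only because $r<|\beta|$, i.e. only when $e\ge1$.

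Both failures actually occur:
\begin{itemize}
\item If $e=0$, then $\mathrm{Ch}_\alpha(X)=\G_\alpha$, while $\pi_*p^*[X]=\delta_\alpha(X)\,[\G_\alpha]$.
\item For $e=1$, take $X=\{p\}\times C\subseteq\PP^1\times\PP^2$ with $C$ a smooth conic, and $\alpha=(0,1)$. Then $r=1$, $\beta=(1,1)$, $e=1$, and the inequalities read $0\ge0$, $1\ge0$, $1\ge1$. The locus $\mathrm{Ch}_\alpha(X)=\{p\}\times(\PP^2)^*$ has class $c_1(Q_1)$, the point class of $\G(0,1)=\PP^1$. But $[X]=2H_1H_2$ gives $\pi_*p^*[X]=2\,c_1(Q_1)$. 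Indeed, a general $L\in\mathrm{Ch}_\alpha(X)$ meets $X$ in the two points $\{p\}\times(L_2\cap C)$. The equality $\dim p_{\{1\}}(X)=|\beta_{\{1\}}|-e$ is exactly where your stratum $I=\{1\}$ fails to be strict.
\end{itemize}
So if $[\mathrm{Ch}_\alpha(X)]$ means the class of the reduced variety, the statement is false, and no strictness argument will rescue Step 2.

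Your Steps 1 and 3 are fine. Step 3 is precisely the paper's proof. Step 1 is more careful than the paper, which simply asserts $[\mathrm{Ch}_\alpha(X)]=\pi_*p^*[X]$ ``as a cycle class'' from the incidence definition. In effect, the paper tacitly counts $\mathrm{Ch}_\alpha(X)$ with multiplicity $\deg(\pi|_{I_\alpha(X)})$, and under that reading Steps 1 and 3 are the whole proof.

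What holds in general is $\pi_*p^*[X]=\deg(\pi|_{I_\alpha(X)})\,[\mathrm{Ch}_\alpha(X)]$. To get degree $1$ you need an extra hypothesis, for instance $e\ge1$ together with strict inequality in \eqref{eq:genericity_conditions} for every proper nonempty $I$. Under that hypothesis:
\begin{itemize}
\item every stratum in your count has dimension $<\sum_i\alpha_i(k_i-\alpha_i)$;
\item so a general $L$ through a general $x$ meets $X$ only at $x$;
\item over $\C$, generic injectivity already gives birationality, so your tangent-space argument is unnecessary.
\end{itemize}
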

\begin{proof}
From the incidence definition, as a cycle class one has $[\mathrm{Ch}_\alpha(X)]=\pi_*p^*[X]$.
Using \eqref{eq: multidegree X} and $p^*(H_i)=\zeta_i$ gives
\[
    \pi_*p^*[X]\ =\ \sum_{|\gamma|=c}\delta_\gamma(X)\ \pi_*\!\left(\prod_{i=1}^n \zeta_i^{\gamma_i}\right)\,.
\]
Since $\pi$ is a fiber product of projective bundles $\PP(S_i)\to \G_\alpha$ of relative dimension $\alpha_i$, projective bundle pushforward yields $(\pi_i)_*\bigl(\zeta_i^{\alpha_i+j}\bigr)=s_j(S_i)$ ($j\ge 0$), $(\pi_i)_*(\zeta_i^m)=0$ for $m<\alpha_i$.
Thus only terms with $\gamma_i\ge \alpha_i$ contribute and $\pi_*\!\left(\prod_{i=1}^n \zeta_i^{\gamma_i}\right) = \prod_{i=1}^n s_{\gamma_i-\alpha_i}(S_i)$.
Finally $s(S_i)=c(S_i)^{-1}$ and $c(S_i)c(Q_i)=1$ from the Whitney sum formula applied to \eqref{eq: ith tautological sequence}, hence $s_{t}(S_i)=c_t(Q_i)$ for all $t$.
\end{proof}

\begin{remark}
Observe that, when $|\alpha|=c-1$, then \eqref{eq: deg multigraded associated variety} simplifies to
\[
    [\mathrm{Ch}_\alpha(X)]\ =\ \sum_{i=1}^n \delta_{\alpha+e_i}(X)\,\prod_{j=1}^n c_{\delta_{ij}}(Q_j)\ =\ \sum_{i=1}^n \delta_{\alpha+e_i}(X)\,c_1(Q_i)\,,
\]
where $e_i$ is the $i$-th standard basis vector in $\mathbb{N}^n$ and $\delta_{ij}$ is the Kronecker delta. This agrees with \cite[Theorem 1.2]{osserman2019multigraded}.
\end{remark}

\begin{example}
Consider $n=3$, $k=1=k_2=k_3=3$, $\alpha=(2,2,2)$ and let $X=\sigma_{(1,1,1)}(Y)$, where $\sigma_{(1,1,1)}$ is the mixed Segre embedding of $(\PP^1)^3$ in $(\PP^3)^3$ introduced at the beginning of Section~\ref{sec: multigraded associated varieties} and $Y$ is a generic complete intersection variety in $(\PP^1)^3$ of codimension $2$, in particular its class in $A^*((\PP^1)^3)=\Z[h_1,h_2,h_3]/\langle h_1^2,h_2^2,h_3^2\rangle$ is
\begin{align*}
    [Y] &= (a_{11}h_1+a_{12}h_2+a_{13}h_3)(a_{21}h_1+a_{22}h_2+a_{23}h_3)\\
    &= (a_{12}a_{21}+a_{11}a_{22})h_1h_2+(a_{13}a_{21}+a_{11}a_{23})h_1h_3+(a_{13}a_{22}+a_{12}a_{23})h_2h_3\,.
\end{align*}
Hence we can use only the three numbers $b_{12}\coloneqq a_{12}a_{21}+a_{11}a_{22}$, $b_{13}\coloneqq a_{13}a_{21}+a_{11}a_{23}$ and $b_{23}\coloneqq a_{13}a_{22}+a_{12}a_{23}$ to define the class of $Y$. Observe that in this case $c=\codim_\PP(Y)=\codim_\PP(\sigma_{(1,1,1)}((\PP^1)^3))+2=6+2=8$ and $\G_\alpha=\G(2,3)^3=((\PP^3)^*)^3$.
In this case, the tautological exact sequence for every factor $\G(2,3)$ is (after identifying $\G(2,3)$ with $\PP^3$)
\begin{equation}
    0\to \ko_{\PP^3}(-1) \to \ko_{\PP^3}^{\oplus 4} \to Q \to 0
\end{equation}
where $Q$ has rank $1$. Calling $q_1=c_1(\ko_{\PP^3}(1))$, then applying Whitney sum formula we get $c(Q)=\frac{1}{1-q_1}=1+q_1+q_1^2+q_1^3$. Note also that $Q=T_{\PP^3}(-1)$. We repeat the same for the other factors of $\G_\alpha$, using variables $q_2$ and $q_3$.
Applying Proposition~\ref{prop: when Chow is of expected codimension} and Proposition~\ref{prop: class of multigraded associated variety}, then
\[
    [\mathrm{Ch}_\alpha(X)]\ =\ \sum_{\substack{|\gamma|=8\\3\ge \gamma_i\ge 2\ \forall i}}\delta_\gamma(X)\ \prod_{i=1}^3 q_i^{\gamma_i-\alpha_i}\,.
\]
We compute all $\delta_\gamma(X)$, which are the coefficients of the multidegree
\[
    [X]=\sum_{\substack{|\gamma|=8\\\gamma_i\le 3\ \forall i}}\delta_\gamma(X)H_1^{\gamma_1}H_2^{\gamma_2}H_3^{\gamma_3}\in A^*((\PP^3)^3)=\frac{\Z[H_1,H_2,H_3]}{\langle H_1^4,H_2^4,H_3^4\rangle}\,.
\]
For this, we note that intersecting with $H_1$, by the push-pull formula, we obtain
\[
    \delta_{(2,3,3)}(X)=\int_{(\PP^3)^3}[X]\,H_1\ =\ \int_{(\PP^1)^3}[Y]\,(h_2+h_3)\ =\ b_{12}+b_{13}\,.
\]
Analogously we get $\delta_{(3,2,3)}(X)=b_{12}+b_{23}$ and $\delta_{(3,3,2)}(X)=b_{13}+b_{23}$. Now substituting this into the formula for $[\mathrm{Ch}_\alpha(X)]$ we get
\[
    [\mathrm{Ch}_{(2,2,2)}(X)]\ =\ (b_{12}+b_{13})\,q_2q_3+(b_{12}+b_{23})\,q_1q_3+(b_{13}+b_{23})\,q_1q_2\,.
\]
The previous identity can be verified using the script in Figure~\ref{fig: degAssVar complete intersection}.
\end{example}
\begin{figure}[h]
\centering
\begin{tcolorbox}[size=fbox,width=\linewidth,colback=blue!5!white,colframe=blue!75!black]
\begin{Verbatim}[fontsize=\small,commandchars=\\\{\}]
A = ZZ[b_(1,2),b_(1,3),b_(2,3)];
R = A[t_1,t_2,t_3]/ideal(t_1^4,t_2^4,t_3^4);
degX = (b_(1,2)+b_(1,3))*t_1^2*t_2^3*t_3^3+(b_(1,2)+b_(2,3))*t_1^3*t_2^2*t_3^3+
(b_(1,3)+b_(2,3))*t_1^3*t_2^3*t_3^2;
mdeg = degAssVar(\{3,3,3\},\{4,4,4\},degX)
\end{Verbatim}
\end{tcolorbox}
\caption{Commands for {\tt degAssVar} given the multidegree of $X$ with symbolic coefficients as input.}\label{fig: degAssVar complete intersection}
\end{figure}

\begin{example}
Following up on the previous example, consider the diagonal $Y=\Delta\subseteq(\PP^1)^3$ and let $X=\sigma_{(1,1,1)}(Y)\subseteq(\PP^3)^3$. In particular $[Y] = h_1h_2+h_1h_3+h_2h_3$ and  $\mathrm{Ch}_{(2,2,2)}(X)$ is a two-codimensional variety of multidegree $[\mathrm{Ch}_{(2,2,2)}(X)] = 2\,q_2q_3+2\,q_1q_3+2\,q_1q_2$. We computed the equations of $\mathrm{Ch}_{(2,2,2)}(X)$ in \texttt{Macaulay2}, see the file \texttt{ExDiagonal.m2} available at \cite{sodomaco2026supplementary}. We consider coordinates $[a_{00},a_{01},a_{10},a_{11}]$, $[b_{00},b_{01},b_{10},b_{11}]$ and $[c_{00},c_{01},c_{10},c_{11}]$ for the three factors in $\G_{(2,2,2)}=((\PP^3)^*)^3$.
In particular $\mathrm{Ch}_{(2,2,2)}(X)$ encodes all products of planes $L=L_1\times L_2\times L_3\in\G_{(2,2,2)}$ such that $L\cap X\neq\emptyset$. Equivalently, we seek conditions on $a_{ij}$, $b_{ij}$ and $c_{ij}$ under which the system in $(\PP^1)^3$
\begin{align}\label{eq: tmNe 2x2x2 game}
\begin{split}
    a_{00}y_0z_0+a_{01}y_0z_1+a_{10}y_1z_0+a_{11}y_1z_1 &= 0\,,\\
    b_{00}x_0z_0+b_{01}x_0z_1+b_{10}x_1z_0+b_{11}x_1z_1 &= 0\,,\\
    c_{00}x_0y_0+c_{01}x_0y_1+c_{10}x_1y_0+c_{11}x_1y_1 &= 0
\end{split}
\end{align}
admits at least one solution in $\Delta$.
We verified that the ideal of $\mathrm{Ch}_{(2,2,2)}(X)$ has $7$ minimal generators of multidegrees $(1,1,1)$, $(0,2,2)$, $(2,0,2)$, $(2,2,0)$, $(2,1,1)$, $(1,2,1)$, $(1,1,2)$, displayed below:
\[
\resizebox{\textwidth}{!}{
$\begin{aligned}
\mathrm{det} &= 
 \begin{vmatrix}
     a_{00} & a_{01}+a_{10} & a_{11}\\[2pt]
     b_{00} & b_{01}+b_{10} & b_{11}\\[2pt]
     c_{00} & c_{01}+c_{10} & c_{11}
 \end{vmatrix}\,,\\
\mathrm{Det}_{1} &=\left(
 \begin{vmatrix}
 b_{00} & \frac{(c_{01}+c_{10})}{2}\\[2pt]
 \frac{(b_{01}+b_{10})}{2} & c_{11}
 \end{vmatrix}
 +
 \begin{vmatrix}
 \frac{(b_{01}+b_{10})}{2} & c_{00}\\[2pt]
 b_{11} & \frac{(c_{01}+c_{10})}{2}
 \end{vmatrix}
 \right)^2 - 4
 \begin{vmatrix}
 b_{00} & c_{00}\\[2pt]
 \frac{(b_{01}+b_{10})}{2} & \frac{(c_{01}+c_{10})}{2}
 \end{vmatrix}
 \cdot
 \begin{vmatrix}
 \frac{(b_{01}+b_{10})}{2} & \frac{(c_{01}+c_{10})}{2}\\[2pt]
 b_{11} & c_{11}
 \end{vmatrix}\,,\\
\mathrm{Det}_{2} &=\left(
 \begin{vmatrix}
 a_{00} & \frac{(c_{01}+c_{10})}{2}\\[2pt]
 \frac{(a_{01}+a_{10})}{2} & c_{11}
 \end{vmatrix}
 +
 \begin{vmatrix}
 \frac{(a_{01}+a_{10})}{2} & c_{00}\\[2pt]
 a_{11} & \frac{(c_{01}+c_{10})}{2}
 \end{vmatrix}
 \right)^2 - 4
 \begin{vmatrix}
 a_{00} & c_{00}\\[2pt]
 \frac{(a_{01}+a_{10})}{2} & \frac{(c_{01}+c_{10})}{2}
 \end{vmatrix}
 \cdot
 \begin{vmatrix}
 \frac{(a_{01}+a_{10})}{2} & \frac{(c_{01}+c_{10})}{2}\\[2pt]
 a_{11} & c_{11}
 \end{vmatrix}\,,\\
\mathrm{Det}_{3} &=\left(
 \begin{vmatrix}
 a_{00} & \frac{(b_{01}+b_{10})}{2}\\[2pt]
 \frac{(a_{01}+a_{10})}{2} & b_{11}
 \end{vmatrix}
 +
 \begin{vmatrix}
 \frac{(a_{01}+a_{10})}{2} & b_{00}\\[2pt]
 a_{11} & \frac{(b_{01}+b_{10})}{2}
 \end{vmatrix}
 \right)^2 - 4
 \begin{vmatrix}
 a_{00} & b_{00}\\[2pt]
 \frac{(a_{01}+a_{10})}{2} & \frac{(b_{01}+b_{10})}{2}
 \end{vmatrix}
 \cdot
 \begin{vmatrix}
 \frac{(a_{01}+a_{10})}{2} & \frac{(b_{01}+b_{10})}{2}\\[2pt]
 a_{11} & b_{11}
 \end{vmatrix}\,,\\
 s_1 &=
 \begin{vmatrix}
 b_{00} & b_{01}+b_{10} & b_{11} & 0\\[2pt]
 0 & c_{00} & c_{01}+c_{10} & c_{11}\\[2pt]
 a_{00} & a_{01}+a_{10} & a_{11} & 0\\[2pt]
 0 & a_{00} & a_{01}+a_{10} & a_{11}
 \end{vmatrix}\,,\ 
 s_2 =
 \begin{vmatrix}
 a_{00} & a_{01}+a_{10} & a_{11} & 0\\[2pt]
 0 & c_{00} & c_{01}+c_{10} & c_{11}\\[2pt]
 b_{00} & b_{01}+b_{10} & b_{11} & 0\\[2pt]
 0 & b_{00} & b_{01}+b_{10} & b_{11}
 \end{vmatrix}\,,\ 
 s_3 =
 \begin{vmatrix}
 a_{00} & a_{01}+a_{10} & a_{11} & 0\\[2pt]
 0 & b_{00} & b_{01}+b_{10} & b_{11}\\[2pt]
 c_{00} & c_{01}+c_{10} & c_{11} & 0\\[2pt]
 0 & c_{00} & c_{01}+c_{10} & c_{11}
 \end{vmatrix}\,.
\end{aligned}$}
\]
The condition $\det=0$ imposes that the three equations in \eqref{eq: tmNe 2x2x2 game} are linearly dependent after restricting to $\Delta$. The polynomial $\mathrm{Det}_1$ is the {\em $2\times 2\times 2$ hyperdeterminant}, more specifically the resultant of the second and third equations in \eqref{eq: tmNe 2x2x2 game} after restricting to $\Delta$, equivalently the resultant of two binary quadrics. Similarly for $\mathrm{Det}_2$ and $\mathrm{Det}_3$.
\end{example}

\begin{example}\label{ex: GR(1,3)}
Following \cite[Example 2.8]{pratt2026multigraded}, let $X\subseteq \PP^3\times \PP^3$ be the conormal variety of the quadratic surface $Q=V(x_1x_2-x_0x_3)\subseteq \PP^3$. The multidegree of $X$ is $[X]=2\,H_1^3H_2+2\,H_1^2H_2^2+2\,H_1H_2^3$, the three coefficients $(2,2,2)$ are hence the polar degrees of the quadratic surface $Q$. The codimension of $X$ is 4, hence any choice $\alpha=(\alpha_1,\alpha_2)$ with $|\alpha|\leq 3$ will give a non-trivial multigraded Chow variety. In particular in our notation we have $r=\dim(X)=2,c=\codim(X)=4,e=|\beta|-r=6-|\alpha|-r\geq 1$.
    
First notice that the two projections $p_{\{1\}}(X),p_{\{2\}}(X)$ give the quadratic surface $Q$ and its dual $Q^\vee$ which is again a quadratic surface. In particular we have $\dim(p_{\{1\}}(X))=\dim(p_{\{2\}}(X))=2$. For every choice of index $\alpha$ we have that $\beta_i=3-\alpha_i\leq 3$ and hence $|\beta_{\{i\}}|-e\leq 2$ for $i=1,2$. Therefore, the conditions of Proposition \ref{prop: when Chow is of expected codimension} are satisfied for every choice of $\alpha$, and we can use Proposition \ref{prop: class of multigraded associated variety} to compute the class $[\mathrm{Ch}_\alpha(X)]$. Using the representation \eqref{eq: chowring rep for 2G(1,3)} of $A^*(\mathbb{G}(1,3)\times \mathbb{G}(1,3))$ and focusing on the most interesting case $\alpha=(1,1)$, we get $[\mathrm{Ch}_{(1,1)}(X)]=2(\sigma_1^2-\sigma_2)+2\sigma_1\tau_1+2(\tau_1^2-\tau_2)$.
\end{example}

Each factor $\G_i=\G(\alpha_i,k_i)$ has its Pl\"ucker embedding; let $q_i \coloneqq c_1(\det Q_i)=c_1(Q_i)\in A^1(\G_i)$, and also denote by $q_i$ its pullback to $\G_\alpha$.
The product $\G_\alpha$ embeds into a product of projective spaces via the Pl\"ucker embeddings. It is then natural to record the \emph{multidegree} with respect to the line bundles $\ko(q_1),\dots,\ko(q_n)$. If $Y\subseteq \G_\alpha$ is a subvariety of dimension $m$, define its {\em Pl\"ucker multidegrees} by
\[
    \deg_{m_1,\dots,m_n}(Y)\coloneqq \int_{\G_\alpha} [Y]\ \prod_{i=1}^n q_i^{m_i},\qquad m_1+\cdots+m_n=m\,.
\]

\begin{proposition}[Pl\"ucker multidegrees of $\mathrm{Ch}_\alpha(X)$]\label{prop:multideg-multi}
Let $m\coloneqq\dim \mathrm{Ch}_\alpha(X)$ (expected or actual). For any $m_1,\dots,m_n\ge 0$ with $\sum_{i=1}^n m_i=m$,
\[
    \deg_{m_1,\dots,m_n}(\mathrm{Ch}_\alpha(X))\ =\ \sum_{\substack{|\gamma|=c\\ b_i\ge \alpha_i}} \delta_\gamma(X)\ \prod_{i=1}^n\int_{\G_i}c_{\,\gamma_i-\alpha_i}(Q_i)\,q_i^{m_i}\,.
\]
\end{proposition}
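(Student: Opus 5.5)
The plan is to substitute the class formula of Proposition~\ref{prop: class of multigraded associated variety} into the definition of the Plücker multidegree and then integrate factor by factor. Concretely, $\deg_{m_1,\dots,m_n}(\mathrm{Ch}_\alpha(X))=\int_{\G_\alpha}[\mathrm{Ch}_\alpha(X)]\prod_i q_i^{m_i}$. By Proposition~\ref{prop: class of multigraded associated variety},
\[
[\mathrm{Ch}_\alpha(X)]=\sum_{|\gamma|=c,\ \gamma_i\ge\alpha_i}\delta_\gamma(X)\prod_{i=1}^n c_{\gamma_i-\alpha_i}(Q_i).
\]
Multiplying by $\prod_i q_i^{m_i}$ and using linearity of $\int_{\G_\alpha}$ reduces the claim to computing $\int_{\G_\alpha}\prod_i c_{\gamma_i-\alpha_i}(Q_i)q_i^{m_i}$ for each $\gamma$. (The condition written ``$b_i\ge\alpha_i$'' in the statement should be read as $\gamma_i\ge\alpha_i$.)

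The key step is the Künneth-type factorization of the degree map on a product. Since $\G_\alpha=\G_1\times\cdots\times\G_n$ and each class $c_{\gamma_i-\alpha_i}(Q_i)q_i^{m_i}$ is pulled back from the $i$-th factor, the product is the exterior product of classes on the factors. The degree of an exterior product $x_1\times\cdots\times x_n$ equals $\prod_i\int_{\G_i}x_i$, where $\int_{\G_i}x_i$ is defined to be zero unless $x_i$ has top degree $\dim\G_i$. This is the standard compatibility of proper pushforward with exterior products, as in Fulton, applied to the structure maps to a point. It gives exactly the stated product of integrals term by term.

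Two points need care, and I expect these to be the main obstacle, though both are bookkeeping.

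First, the hypothesis ``expected or actual'' $m$. The formula of Proposition~\ref{prop: class of multigraded associated variety} was proved under the genericity conditions of Proposition~\ref{prop: when Chow is of expected codimension}. I would interpret the right-hand side in general as $\int_{\G_\alpha}\pi_*p^*[X]\prod_i q_i^{m_i}$. When the expected dimension fails, $\pi_*p^*[X]=0$ as a cycle of the expected dimension, since $\pi$ then has positive-dimensional fibers over its image. So the statement is meaningful, and correct, when $m$ is the expected dimension. I would state this explicitly and otherwise assume the hypotheses of Proposition~\ref{prop: when Chow is of expected codimension}.

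Second, the degree count. A term survives only if $\gamma_i-\alpha_i+m_i=\dim\G_i$ for every $i$. This is consistent with $\sum_i(\gamma_i-\alpha_i)+m=\dim\G_\alpha$, because the codimension $\sum_i(\gamma_i-\alpha_i)=c-|\alpha|=e$. So the sum automatically picks out the terms with matching degrees, and the others vanish because the corresponding factor integral is zero.
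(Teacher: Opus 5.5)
Your proposal is correct and follows the paper's proof. Both substitute the class formula of Proposition~\ref{prop: class of multigraded associated variety} into the definition of the Pl\"ucker multidegree and factor the integral over $\G_\alpha=\G_1\times\cdots\times\G_n$ into integrals over the individual Grassmannians; the paper cites $A^*(\G_\alpha)\cong\bigotimes_i A^*(\G_i)$ where you cite compatibility of proper pushforward with exterior products, which is the same step. Your side remarks are also accurate: ``$b_i\ge\alpha_i$'' is a typo for $\gamma_i\ge\alpha_i$, and $m$ must be read as the expected dimension under the hypotheses of Proposition~\ref{prop: when Chow is of expected codimension}, since if those fail and $m$ is the actual (smaller) dimension, every term on the right vanishes for degree reasons while the left-hand side need not.
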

\begin{proof}
Insert the class formula from Proposition~\ref{prop: class of multigraded associated variety} into the definition of multidegrees.
Since $A^*(\G_\alpha)\cong \bigotimes_{i=1}^k A^*(\G_i)$, the remaining integrals are products of Schubert numbers on the individual Grassmannians.
\end{proof}

\begin{example}
We continue Example \ref{ex: GR(1,3)} and compute the Pl\"ucker multidegrees of $\mathrm{Ch}_{(1,1)}(X)$ where $X$ is the conormal variety of the quadratic surface $Q=V(x_1x_2-x_0x_3)$. First we need to understand $h_1,h_2$ and the degree map $\int_{\G_{(1,1)}}$ in the representation \eqref{eq: chowring rep for 2G(1,3)} of $A^*(\G(1,3)\times\G(1,3))$. As computed already in Example \ref{ex: GR(1,3)}, we have  $h_1=c_1(\det Q_1)=c_1(Q_1)=-\sigma_1,h_2=c_1(\det Q_2)=c_1(Q_2)=-\tau_1$. Furthermore, the degree map is the unique map $A^8(\G_{(1,1)})$ which sends the monomial $\sigma_2^2\tau_2^2$ to one. This can be seen, for example, from the equality
\[
    4=\deg\G_{(1,1)}=\int_{\G_{(1,1)}}q_1^4q_2^4=\int_{\G_{(1,1)}}\sigma_1^4\tau_1^4=\int_{\G_{(1,1)}}4\sigma_2^2\tau_2^2=4\int_{\G_{(1,1)}}\sigma_2^2\tau_2^2\,,
\]
where we computed the degree on the left as the degree of the image under the Pl\"ucker embedding. We get
\begin{align*}
    \deg_{2,4}(\mathrm{Ch}_{(1,1)}(X)) &= \int_{\G_{(1,1)}}2((\sigma_1^2-\sigma_2)+\sigma_1\tau_1+(\tau_1^2-\tau_2))(-\sigma_1)^2(-\tau_1)^4\\
    &= \int_{\G_{(1,1)}}2\sigma_1^4\tau_1^4-2\sigma_1^2\sigma_2\tau_1^4=2\int_{\G_{(1,1)}}\sigma_2^2(2\tau_2^2)=4\\
    \deg_{4,2}(\mathrm{Ch}_{(1,1)}(X)) &= \int_{\G_{(1,1)}}2((\sigma_1^2-\sigma_2)+\sigma_1\tau_1+(\tau_1^2-\tau_2))(-\sigma_1)^4(-\tau_1)^2\\
    &= \int_{\G_{(1,1)}}2\sigma_1^4\tau_1^4-2\tau_1^2\tau_2\sigma_1^4=2\int_{\G_{(1,1)}}\tau_2^2(2\sigma_2^2)=4\\
    \deg_{3,3}(\mathrm{Ch}_{(1,1)}(X)) &= \int_{\G_{(1,1)}}2((\sigma_1^2-\sigma_2)+\sigma_1\tau_1+(\tau_1^2-\tau_2))(-\sigma_1)^3(-\tau_1)^3\\
    &= 2\int_{\G_{(1,1)}}2\sigma_1^4\tau_1^4=2\deg\G_{(1,1)}=8\,.
\end{align*}
In particular $\deg_{(1,1)}(\mathrm{Ch}_\alpha(X))$ the class of the image of $\mathrm{Ch}_{(1,1)}(X)$ under the coordinatewise Pl\"ucker embedding is $4\,T_1^3T_2+8\,T_1^2T_2^2+4\,T_1T_2^3$.
We confirmed this computation using the function \texttt{pluckerDegAssVar} of the supplementary software \texttt{nashLoci.m2}; see Figure~\ref{fig: pluckerDegAssVar}.
In particular, the ideal of $\mathrm{Ch}_{(1,1)}(X)$ under the product of two Pl\"ucker embeddings is minimally generated by 3 quadrics and 10 polynomials of total degree 4. Two of the quadrics are simply the Pl\"ucker relations in the two factors of bidegree $(2,0)$ and $(0,2)$ respectively, the remaining quadric in the Pl\"ucker coordinates $x_{01},\ldots,x_{23}$ and $y_{01},\ldots,y_{23}$
\[
    x_{01}y_{01}-x_{02}y_{02}-x_{12}y_{03}-x_{03}y_{12}-x_{13}y_{13}+x_{23}y_{23}
\]
has bidegree $(1,1)$, and all 10 degree 4 relations have bidegree $(2,2)$.
\end{example}
\begin{figure}[h]
\centering
\begin{tcolorbox}[size=fbox,width=\linewidth,colback=blue!5!white,colframe=blue!75!black]
\begin{Verbatim}[fontsize=\small,commandchars=\\\{\}]
load "nashLoci.m2"
R = QQ[x_0..x_3]**QQ[y_0..y_3];
xx = matrix\{\{x_0..x_3\}\}; yy = matrix\{\{y_0..y_3\}\};
IQ = ideal(x_0*x_3-x_1*x_2);
IsingQ = radical ideal singularLocus IQ;
jacQ = diff(xx, transpose gens IQ)
conQ = saturate(IQ + minors(codim IQ + 1, jacQ||yy), IsingQ);
pluckerDegAssVar(\{2,2\},\{4,4\},conQ)
\end{Verbatim}
\end{tcolorbox}
\caption{Computation of the Pl\"ucker multidegree of $\mathrm{Ch}_{(1,1)}(X)$ for the conormal variety $X$ of a nonsingular quadric in $\PP^3$, whose vanishing ideal is denoted by \texttt{conQ} in the script.}\label{fig: pluckerDegAssVar}
\end{figure}

\bibliographystyle{alphaurl}
\bibliography{biblio}

\bigskip \medskip \bigskip

\noindent
\small {\bf Authors' addresses:}
\smallskip

\noindent Luca Sodomaco\\
Max Planck Institute for Mathematics in the Sciences, Leipzig, Germany\\
\url{luca.sodomaco@mis.mpg.de}

\medskip

\noindent Julian Weigert\\
Max Planck Institute for Mathematics in the Sciences, Leipzig, Germany\\
Mathematisches Institut, Georg-August Universit\"at G\"ottingen, Germany\\ \url{julian.weigert@mis.mpg.de}

\end{document}